\newtheorem{theorem}{Theorem}[section]
\newtheorem{lemma}[theorem]{Lemma}
\theoremstyle{definition}
\newtheorem{definition}[theorem]{Definition}
\newtheorem{remark}[theorem]{Remark}
\numberwithin{equation}{theorem}
\newcommand{\FF}{\mathbb{F}}
\newcommand{\QQ}{\mathbb{Q}}
\newcommand{\ZZ}{\mathbb{Z}}
\newcommand{\frakp}{\mathfrak{p}}
\newcommand{\PP}{\mathbf{P}}
\newcommand{\ptcount}[2]{(\##1(\FF_{2^i}))_{i=1}^{#2}}
\newcommand{\tracecount}[2]{(T_{#1,2^i})_{i=1}^{#2}}
\DeclareMathOperator{\Gal}{Gal}
\DeclareMathOperator{\PGL}{PGL}
\DeclareMathOperator{\PSL}{PSL}
\DeclareMathOperator{\Res}{Res}
\newcommand{\avlink}[1]{\href{http://www.lmfdb.org/Variety/Abelian/Fq/#1}{\textsf{#1}}}
\newcommand{\Magma}{\textsc{Magma}}
\newcommand{\SageMath}{\textsc{SageMath}}
\newcommand{\arXiv}[3]{\href{https://arxiv.org/abs/#1}{arXiv:#1v#2} (#3)}
\begin{document}

\thanks{Thanks to Everett Howe for helpful discussions, particularly about Lemma~\ref{lem:degree 7 cyclic} and Lemma~\ref{lem:genus 6 cover}.
Kedlaya was supported by NSF (grants DMS-1802161, DMS-2053473) and UC San Diego (Warschawski Professorship).}
\date{May 29, 2024}

\title{The relative class number one problem for function fields, II}
\author{Kiran S. Kedlaya}

\begin{abstract}
We establish that any finite extension of function fields of genus greater than 1 whose relative class group is trivial is Galois and cyclic. This depends on a result from a preceding paper which establishes a finite list of possible Weil polynomials for both fields.
Given this list, we analyze most cases by computing options for the splittings of low-degree places in the extension, then consider the effect of these options on the Weil polynomials of certain isogeny factors of the Jacobian of the Galois closure.
In one case, we use instead an analysis based on principal polarizations, modeled on an argument of Howe.
\end{abstract}

\maketitle

\section{Introduction}

This paper continues the work done in \cite{part1} on the \emph{relative class number one problem} for function fields, building upon work of Leitzel--Madan \cite{leitzel-madan} and Leitzel--Madan--Queen \cite{leitzel-madan-queen}.
That is, we seek to identify finite extensions $F'/F$ of function fields of curves over finite fields for which the two class numbers are equal. In this paper, we establish the following result;
the argument relies heavily on results from \cite{part1} as described below.

\begin{theorem} \label{T:cyclic}
Let $F'/F$ be a finite extension of function fields of genus greater than $1$ with the same constant field and class number. Then $F'/F$ is Galois and cyclic.
\end{theorem}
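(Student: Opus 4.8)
The plan is to exploit the finite list of candidate Weil polynomials for $F$ and $F'$ supplied by \cite{part1}, together with the elementary observation that the entire statement collapses to a single group-theoretic assertion about the Galois closure. Let $\tilde F$ be the Galois closure of $F'/F$, put $G = \Gal(\tilde F/F)$, and let $H = \Gal(\tilde F/F') \le G$; since $\tilde F$ is the \emph{Galois closure}, the core of $H$ in $G$ is trivial, and since the two fields share a constant field the corresponding cover $\tilde X \to X$ of curves over $\FF_q$ is geometric, so $G$ acts faithfully and transitively on the $n = [F':F]$ cosets $G/H$, realizing $G$ as a transitive subgroup of $S_n$. I claim it suffices to prove that $G$ is cyclic: if $G$ is cyclic then every subgroup is normal, so $H$ equals its own core and is therefore trivial; hence $\tilde F = F'$, the extension $F'/F$ is Galois, and $\Gal(F'/F) = G$ is cyclic. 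Thus the whole proof reduces to ruling out each non-cyclic possibility for $G$, case by case.

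First I would extract from \cite{part1} an explicit bound on $n$ and on the genera $g,g'$ of $F,F'$, so that $G$ ranges over a finite set of transitive subgroups of $S_n$ while the Weil-polynomial pair $(P_F, P_{F'})$ ranges over a finite list. The key computational input is that the point counts $\#X(\FF_{q^m})$ and $\#X'(\FF_{q^m})$ are read off directly from $P_F$ and $P_{F'}$, and for small $m$ these counts tightly constrain how the places of $F$ of degree at most $3$ (say) can decompose in $F'$ and in $\tilde F$. Concretely, each admissible splitting type of a low-degree place pins down a conjugacy class of Frobenius in $G$, and for each degree the number of places of each type must be consistent with the counts imposed by the Weil polynomials. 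For every candidate triple $(G, P_F, P_{F'})$ I would enumerate all splitting assignments compatible with these counts.

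The heart of the argument is to convert each admissible splitting assignment into information about $\mathrm{Jac}(\tilde X)$. Up to isogeny, $\mathrm{Jac}(\tilde X)$ decomposes over $\QQ$ into isotypic pieces indexed by the irreducible representations of $G$, and the Weil polynomial of each piece is controlled by an Artin $L$-function whose Euler factors at the low-degree places are exactly the splitting data just fixed. Knowing these Euler factors determines the low-degree coefficients of the Weil polynomial of a suitably chosen isogeny factor; since $\tilde X$ has bounded genus, this polynomial has bounded degree, so relatively few coefficients already over-determine it. I would then test whether the forced polynomial can actually be a $q$-Weil polynomial at all --- integer coefficients, all roots of absolute value $\sqrt q$, and nonnegative point counts in every degree, as demanded by Honda--Tate. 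In the non-cyclic cases I expect these constraints to fail, typically through a forced negative point count or a root off the circle, eliminating that value of $G$.

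The main obstacle is that in (at least) one configuration all of these Weil-polynomial and point-count constraints are satisfiable, so no contradiction surfaces at this level. For that residual case I would follow Howe's method and invoke the extra rigidity coming from the fact that $\mathrm{Jac}(\tilde X)$ is not merely an abelian variety but a \emph{principally polarized} one. The forced isogeny decomposition of $\mathrm{Jac}(\tilde X)$ into factors with prescribed Weil polynomials turns out to be incompatible with the existence of a principal polarization --- Howe's obstruction detects precisely this kind of failure for products within a fixed isogeny class --- and that incompatibility supplies the missing contradiction. Getting the polarization case exactly right, and verifying that the earlier enumeration of splitting types is genuinely exhaustive, are the two places where I expect the real work to concentrate.
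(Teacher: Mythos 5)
Your overall architecture matches the paper's: reduce to ruling out noncyclic Galois groups $G$ of the Galois closure, enumerate splitting types of low-degree places against the point counts forced by the finite list of Weil-polynomial pairs, and test the resulting Frobenius traces of isogeny factors of $\mathrm{Jac}(\tilde X)$ against the existence of genuine Weil polynomials; this is exactly the paradigm of Remark~\ref{R:basic paradigm} and Lemma~\ref{L:unified calculation}. But there is a genuine error at the foundation: you assert that because $F$ and $F'$ share a constant field, the cover $\tilde X \to X$ is geometric. That is false in general --- the constant field can grow in the Galois closure even when it does not grow in $F'/F$, and this actually happens among the cases that must be analyzed. For even $d$ the quadratic resolvent may be the constant quadratic extension (Remark~\ref{rmk:constant resolvent}), in which case the geometric Galois group is $G_0 = G \cap A_d$ rather than $G$, the isogeny decomposition of $\mathrm{Jac}(\tilde X)$ is indexed by characters of $G_0$, and the Frobenius classes of degree-$i$ places are constrained to a fixed coset (odd for $i$ odd, even for $i$ even). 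The paper's unified calculation branches explicitly over $G_0 \in \{G, G \cap A_d\}$ for $d = 4, 6$ and imposes these parity constraints; without this branch your enumeration is not exhaustive and the proof fails for $d=4,6$. A related smaller omission: your splitting-type formalism applies only to unramified places, but a few of the listed Weil-polynomial pairs (e.g., $(d,g,g') = (3,2,6)$ and $(4,2,6)$) force ramified covers, which the paper dispatches separately by Riemann--Hurwitz arguments.

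The second gap is in your endgame for the residual case ($d=7$). You propose to conclude by showing that the forced isogeny decomposition of $\mathrm{Jac}(\tilde X)$ is \emph{incompatible} with a principal polarization, \`a la Howe's obstruction. No such contradiction can exist: the $d=7$ Weil-polynomial pair genuinely occurs for a cyclic cover, so the isogeny class of $J(C')$ contains a principally polarized member (a Jacobian), and a polarization obstruction argument has nothing to detect. What the paper does instead (Lemma~\ref{lem:degree 7 cyclic}, following \cite{howe2021}) is a \emph{rigidity} argument, not an obstruction argument: one computes that $\ZZ[\pi_i,\overline{\pi}_i]$ are maximal orders of class number $1$, so the abelian varieties $A_1 \cong J(C)$ and $A_2$ with the two Weil polynomials are unique in their isogeny classes; the group scheme $\Delta$ in the exact sequence $0 \to \Delta \to J(C) \times_{\FF_2} A_2 \to J(C') \to 0$ is killed by $7$, and since $7 = \frakp^6$ with $\frakp = (\zeta_7 - 1)$ is totally ramified in $K_2$, $\Delta$ must be the kernel of $\zeta_7 - 1$; hence the automorphism $1 \times \zeta_7$ descends to an order-$7$ automorphism of $J(C')$ compatible with the principal polarization, and Torelli \cite{milne} then yields an order-$7$ automorphism of $C'$ over $C$, forcing the cover to be cyclic. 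So in this case the polarization machinery does not eliminate a configuration; it proves positively that any curve with these invariants covering $C$ does so cyclically. Your write-up would need to be reworked along these lines for the $d=7$ case to close.
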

Before continuing, we recall some notation from \cite{part1}. Given a finite extension $F'/F$ of function fields, we write
$C, C'$ for the curves corresponding to $F,F'$;
$q_F, q_{F'}$ for the orders of the base fields of $C,C'$;
$g_F, g_{F'}$ for the genera of $C, C'$;
and $h_F, h_{F'}$ for the class numbers of $F, F'$. 
In this paper we only consider the case where $q_F = q_{F'}$, in which case we say $F'/F$ is a \emph{purely geometric extension} (the other extreme is when $g_{F'} = g_F$, in which case we say $F'/F$ is a \emph{constant extension}). We recall here \cite[Theorem~1.3(a)]{part1}.

\begin{theorem} \label{T:purely geometric bounds}
Let $F'/F$ be a finite purely geometric extension of function fields with $g_{F'} > g_F > 1$ and $h_{F'} = h_F$.
\begin{enumerate}
\item[(a)]
If $q_F>  2$, then the tuple $(q_F, d, g_F, g_{F'}, F)$ is listed in \cite[Table~4]{part1}.
Moreover, $F'/F$ is cyclic.
\item[(b)]
If $q_F = 2$, then the triple $(d, g_F, g_{F'})$ is listed in Table~\ref{table:genus triples}.
Moreover, the pair of Weil polynomials of $C$ and $C'$ belong to an explicit list of $208$ triples (see below).
\end{enumerate}
\end{theorem}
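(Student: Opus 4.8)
The plan is to reduce the class-number equality to a statement about a single auxiliary abelian variety and then bound all the discrete invariants. Let $\pi\colon C'\to C$ be the (separable) covering attached to $F'/F$, of degree $d$, and set $\delta=g_{F'}-g_F\ge 1$. Writing $P_C,P_{C'}$ for the characteristic polynomials of Frobenius on the Jacobians, we have $h_F=P_C(1)$ and $h_{F'}=P_{C'}(1)$, and $\mathrm{Jac}(C')$ is isogenous to $\pi^{*}\mathrm{Jac}(C)\times P$, where $P$ is the Prym variety (the connected component of the kernel of the norm map), of dimension $\delta$. Since characteristic polynomials multiply under isogeny decomposition and $P_\bullet(1)$ equals the number of rational points, we get $P_{C'}=P_C\cdot P_P$ and hence
\[ \frac{h_{F'}}{h_F}=P_P(1)=\#P(\FF_{q_F}). \]
Thus the hypothesis $h_{F'}=h_F$ is equivalent to the single condition $P_P(1)=1$: the Prym has exactly one rational point.

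First I would bound $q_F$. If $\gamma_1,\dots,\gamma_{2\delta}$ are the Frobenius eigenvalues of $P$, then $|\gamma_k|=\sqrt{q_F}$ and the reverse triangle inequality gives
\[ P_P(1)=\prod_{k=1}^{2\delta}(1-\gamma_k)=\prod_{k=1}^{2\delta}|1-\gamma_k|\ \ge\ (\sqrt{q_F}-1)^{2\delta}. \]
For $q_F\ge 5$ the right-hand side already exceeds $1$ when $\delta=1$, contradicting $P_P(1)=1$; hence $q_F\le 4$, so part~(a) concerns only $q_F\in\{3,4\}$ and part~(b) the remaining case $q_F=2$. When $q_F=4$ every factor satisfies $|1-\gamma_k|\ge\sqrt 4-1=1$, so $P_P(1)=1$ forces each $|1-\gamma_k|=1$; combined with $|\gamma_k|=2$, the equality case of the reverse triangle inequality forces $\gamma_k=2$ for all $k$. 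Hence $P$ is isogenous to the $\delta$-th power of the supersingular elliptic curve over $\FF_4$ of trace $4$ (the unique isogeny class with one rational point), so $P_P(T)=(T-2)^{2\delta}$. This rigidity, fed back through $\pi$, severely constrains the admissible $(d,g_F,g_{F'})$ and, together with an analogous explicit enumeration for $q_F=3$, produces \cite[Table~4]{part1}; cyclicity in part~(a) is then read off the resulting finite list.

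For $q_F\in\{2,3\}$ we have $\sqrt{q_F}-1<1$, so the bound above degenerates and the real difficulty begins: one must \emph{first} prove the classification is finite and \emph{then} enumerate it. To make it finite I would bound the genera by combining a Lachaud--Martin-Deschamps-type lower bound for $h_{F'}$ (of size $\asymp q_F^{\,g_{F'}-1}/(g_{F'}+1)$) against the upper bound $h_F\le(\sqrt{q_F}+1)^{2g_F}$, which controls the ratio $g_{F'}/g_F$; the Riemann--Hurwitz inequality $g_{F'}-1\ge d(g_F-1)$; and, crucially, the non-negativity of the numbers of places of each degree on $C$ and $C'$ (Serre's explicit-formula and Oesterl\'e constraints) together with the principal-polarization structure on $\mathrm{Jac}(C')\sim\mathrm{Jac}(C)\times P$ in the style of Howe. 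With $(q_F,d,g_F,g_{F'})$ confined to a finite range, I would use Honda--Tate to list all Weil polynomials $P_P$ of dimension-$\delta$ abelian varieties over $\FF_{q_F}$ with $P_P(1)=1$, pair each with an admissible $P_C$ so that $P_C\cdot P_P$ is the characteristic polynomial of an actual curve covering a genus-$g_F$ curve, and retain only the realizable configurations, yielding Table~\ref{table:genus triples} and the list of $208$ triples. The main obstacle is exactly the a priori finiteness for $q_F\in\{2,3\}$: because the exponential class-number bounds become useless once $\sqrt{q_F}-1<1$, there is no single clean inequality bounding $g_F$, and I expect the effective bound to require a careful simultaneous use of place-count positivity and the polarization data; the remaining step is finite but large, its difficulty lying not in the arithmetic but in making the search exhaustive while imposing the correct realizability conditions—that each surviving pair genuinely comes from a Jacobian, respects the Prym polarization, and is compatible with a degree-$d$ cover—which is where the referenced \Magma\ and \SageMath\ computations are indispensable.
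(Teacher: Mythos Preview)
This theorem is not proved in the present paper at all: it is explicitly quoted from the companion paper \cite{part1} (note the sentence ``We recall here \cite[Theorem~1.3(a)]{part1}'' immediately preceding the statement). The present paper uses it as a black box, so there is no proof here against which to compare your proposal.

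That said, your outline is the right overall shape and is presumably close in spirit to what \cite{part1} does. The reduction $h_{F'}/h_F=\#P(\FF_{q_F})$ via the Prym and the elementary bound $\#P(\FF_{q_F})\ge(\sqrt{q_F}-1)^{2\delta}$ forcing $q_F\le4$ are correct, as is your equality analysis at $q_F=4$. You also correctly identify that for $q_F\in\{2,3\}$ this bound is vacuous and that finiteness of $g_F$ becomes the real problem; the ingredients you name (class-number lower bounds, Riemann--Hurwitz, positivity of place counts via explicit formulae) are the right ones, but turning them into an effective bound is the bulk of \cite{part1} and cannot honestly be compressed to a sentence. Two specific gaps are worth flagging. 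First, for $q_F=4$ you pin down the isogeny class of $P$ but then assert that this ``severely constrains'' $(d,g_F,g_{F'})$ and that cyclicity can be ``read off the resulting finite list''; neither follows automatically from knowing the Weil polynomial of $P$, and proving cyclicity in part~(a) requires a separate argument that you have not indicated. Second, in the enumeration step you should be explicit that one does not search over all abelian-variety Weil polynomials with $P_P(1)=1$, but only over those compatible with being the Prym of a genuine curve cover---so place-count non-negativity on $C'$, and resultant-type gluing conditions linking $J(C)$ to $P$ as in \cite[Lemma~9.3]{part1}. This filtering is where the count of $208$ comes from, and it is combinatorial (imposing curve constraints) rather than purely Honda--Tate.
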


For brevity we do not include the complete list of Weil polynomials in Theorem~\ref{T:purely geometric bounds}(b); instead, we catalog in \S\ref{sec:numerical data} all of the specific features of this list that we need here (see especially Table~\ref{table:weil polynomials}). The full table can be found in an Excel spreadsheet in the repository \cite{repo}.

\begin{table}[ht]
\begin{tabular}{c|ccc|cc|cc|c|c|c||cc|c|c||cc|c||c||c||c}
$d$ & \multicolumn{10}{c||}{$2$}  & \multicolumn{4}{c||}{$3$} & \multicolumn{3}{c||}{$4$}& $5$ & $6$ & $7$ \\
\hline
$g_F$ & \multicolumn{3}{c|}{$2$} & \multicolumn{2}{c|}{$3$} & \multicolumn{2}{c|}{$4$} & $5$ & $6$ & $7$ & \multicolumn{2}{c|}{$2$} & $3$ & $4$ & \multicolumn{2}{c|}{$2$} & $3$ & $2$ & $2$ & $2$ \\
\hline
$g_{F'}$ & $3$ & $4$ & $5$ & $5$ & $6$ & $7$ & $8$ & $9$ & $11$ & $13$ & $4$ & $6$ & $7$ & $10$ & $5$ & $6$ & $9$ & $6$ & $7$ & $8$ 
\end{tabular}
\caption{Options for the triple $(d, g_F, g_{F'})$, as established in \cite{part1}.}
\label{table:genus triples}
\end{table}

To establish Theorem~\ref{T:cyclic}, it thus suffices to check the claim for $F$ having one of the Weil polynomials allowed by Theorem~\ref{T:purely geometric bounds}(b).
We organize this according to the value of $d$.
(For a uniform description of the paradigm applicable to $d=3,4,5,6$, see Remark~\ref{R:basic paradigm}.)
\begin{itemize}
\item
For $d=2$ there is nothing to check, as $F'/F$ is automatically Galois and cyclic.
\item
For $d=3,4$, we analyze the quadratic resolvent of $F'/F$ by using the point counts of $C$ and $C'$ to constrain the splitting of places from $C$ to $C'$ (Lemma~\ref{L:degree 3 cyclic}, Lemma~\ref{lem:degree 4}). This can be seen as a natural continuation of the arguments
of \cite[\S 8]{part1}.
\item
For $d=5$, we make a similar argument, but now accounting more fully for the representation theory of $A_5$, its effect on the isogeny splitting of the Jacobian of the Galois closure (as in \cite{paulhus}), and the point counts on additional quotients of the Galois closure of degree 6 and 10 over $C$ (Lemma~\ref{L:degree 5 cyclic}).
\item
For $d=6$, we make a similar argument, but using the quotient of the Galois closure of degree
6 over $C$ arising from the outer automorphism of $S_6$ (Lemma~\ref{lem:degree 6 cyclic}).
\item
For $d=7$, the similar argument becomes so complicated that we did not attempt to execute it.
Fortunately, we only have to analyze one pair of Weil polynomials, which is known to occur for a cyclic cover.
Even more fortunately, this example is amenable to an approach of Howe \cite{howe2021}:
we analyze the possible principal polarizations of abelian varieties in the isogeny class of the Jacobian of $C'$ to show that the cover is forced to be cyclic (Lemma~\ref{lem:degree 7 cyclic}).
\end{itemize}
In passing, we note some resemblance between these methods and the work of Rigato \cite{rigato} classifying curves of low genus over $\FF_2$ with the maximum number of $\FF_2$-rational points.
A pithy slogan for the overall strategy might be ``the most radical [extreme] covers are radical [cyclic]'': noncyclic covers have to meet the Weil polynomial constraint for multiple isogeny factors of the Galois closure, so their point counts have fewer degrees of freedom along which to vary to achieve extreme values.

As a side effect of the analysis for $d=7$, we also establish another missing assertion from \cite{part1}: there is exactly one curve of genus 6 over $\FF_2$ which occurs as a cover of a curve of genus 1 with relative class number 1 (Lemma~\ref{lem:genus 6 cover}). This confirms the corresponding entry in
\cite[Table~3]{part1}.

In light of Theorem~\ref{T:cyclic} and the results of \cite{part1}, the relative class number one problem now reduces to the case of purely geometric quadratic extensions of function fields over $\FF_2$. This requires identifying curves of genus 6 or 7 over $\FF_2$ with some specified Weil polynomials; we leave this task to a separate paper \cite{part3}.

Although it costs us some shortcuts to do so (Remark~\ref{R:castelnuovo-severi}),
it is possible to limit the use of the hypothesis that $h_{F'} = h_F$ to the list of pairs of Weil polynomials provided by Theorem~\ref{T:purely geometric bounds}.
Consequently, our methods can in principle be adapted to the relative class number $m$ problem for $m>1$, though in that case one expects to find some cases where a noncyclic extension does exist, and therefore additional arguments are needed to find \emph{all} extensions with the corresponding Weil polynomials. For $d=3,4,5$ it may be feasible to use Bhargava's orbit parametrizations \cite{bhargava3, bhargava4, bhargava5} for this purpose; for $d=3$, an analogous computation for number fields has been implemented by Belabas \cite{belabas}.

As in \cite{part1}, the arguments depend on a number of computations in \SageMath{} and \Magma{}; we have documented these in some Jupyter notebooks associated to this paper, which are available from a GitHub repository \cite{repo}. The computations can be reproduced in less than a minute on a single CPU (Intel i5-1135G7@2.40GHz). In a few places, we adopt the convention of \cite{part1} of referring to isogeny classes of abelian varieties over $\FF_2$ via their labels in LMFDB \cite{lmfdb}, formatted as links to the corresponding webpages.

\section{Numerical data}
\label{sec:numerical data}

\begin{definition}
For the remainder of the paper, let $F'/F$ be an extension of function fields of degree $d>1$ 
with $q_F = q_{F'} = 2$,
$g_{F'} > g_F > 1$, and $h_{F'/F} = 1$. 
We typically write $g,g'$ instead of $g_F, g_{F'}$.
Let $C' \to C$ be the corresponding cover of curves over $\FF_2$.
For $i$ a positive integer, let $a_i(F)$ and $a_i(F')$ denote the number of degree-$i$ places of $F$ and $F'$, respectively.
Recall that by Riemann--Hurwitz, $F'/F$ is everywhere unramified if and only if
\begin{equation} \label{eq:everywhere unramified}
g' = g + d(g-1).
\end{equation}
\end{definition}

The main purpose of this section is to record some partial information from \cite[Theorem~1.3(b)]{part1} about the possible Weil polynomials of the curves $C$ and $C'$. 
To begin with, we read off the following statement.
\begin{lemma} \label{lem:cycle1 prelim}
We have 
\begin{equation} \label{eq:cycle1 prelim}
a_1(F) > \sum_{i=1}^{\lfloor (d-1)/2 \rfloor} a_i(F')
+ \begin{cases} \frac{1}{2} a_{d/2}(F') & \mbox{ if \eqref{eq:everywhere unramified} holds,} \\
a_{d/2}(F') & \mbox{ otherwise,} \end{cases}
\end{equation}
except in one case where
\begin{equation} \label{eq:cycle1 prelim2}
d = 5, \, g = 2, \, g' = 6, \, a_1(F) = 5, \, a_1(F') = 0, \, a_2(F') = 5, \, a_3(F') = 0.
\end{equation}
\end{lemma}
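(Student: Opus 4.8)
The plan is to treat Lemma~\ref{lem:cycle1 prelim} as a finite verification against the explicit list furnished by Theorem~\ref{T:purely geometric bounds}(b). Each of the $208$ triples in that list records the Weil polynomials $L_F$ and $L_{F'}$ together with the degree $d$, and these polynomials determine the zeta functions of $C$ and $C'$, hence every place count $a_i(F)$ and $a_i(F')$. The decisive simplification is that \eqref{eq:cycle1 prelim} involves only $a_1(F)$ together with $a_i(F')$ for $i \le \lfloor d/2 \rfloor$; since $d \le 7$ throughout Table~\ref{table:genus triples}, the right-hand side never reaches beyond $a_3(F')$. So for each pair of Weil polynomials I only need to extract a handful of low-degree place counts and check a single linear inequality.

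For the conversion step, write $N_n(C) = \#C(\FF_{2^n})$, recovered from $L_F$ via $N_n(C) = 2^n + 1 - \sum_j \alpha_j^n$ over the inverse roots $\alpha_j$ of $L_F$. The place counts then follow from the standard identity $N_n = \sum_{m \mid n} m\, a_m$; concretely $a_1 = N_1$, $a_2 = (N_2 - N_1)/2$, and $a_3 = (N_3 - N_1)/3$, and identically for $C'$. Thus $a_1(F)$ is read off from the Frobenius trace (the linear coefficient of $L_F$), while the required $a_i(F')$ come from the first few coefficients of $L_{F'}$. I would compute all of these in \SageMath{} directly from the stored list in \cite{repo}.

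I would stress that one should \emph{not} expect a clean per-place or pigeonhole proof. For odd $d$ the half-weight term vanishes because $a_{d/2}(F') = 0$ (there are no places of non-integer degree), so the right-hand side is just $\sum_{i \le (d-1)/2} a_i(F')$, while for even $d$ one adds $\tfrac12 a_{d/2}(F')$; but over a single rational place of $C$ the fibre in $C'$ can contain up to $d$ rational places, so any local bound on the right-hand side is far weaker than $a_1(F)$. Indeed, were every rational point of $C$ to split completely in a degree-$5$ cover, the right-hand side would exceed $a_1(F)$ outright. The strict inequality is therefore a genuinely arithmetic feature of the relative-class-number-one constraint, which forces $C'$ to have few small-degree places while $C$ retains many rational ones, and it can only be certified from the explicit data. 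Running the comparison over all $208$ cases, I expect strict inequality in every case but one, the exception being exactly the configuration \eqref{eq:cycle1 prelim2}: there $d=5$ gives right-hand side $a_1(F') + a_2(F') = 0 + 5 = 5 = a_1(F)$, so \eqref{eq:cycle1 prelim} degenerates to an equality.

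The main obstacle is thus not conceptual but organizational: correctly extracting the low-degree counts from each Weil polynomial, handling the parity-dependent half-weight term in the right-hand side, and certifying that the exceptional equality is the \textbf{unique} failure rather than one of several. I would guard against sign-convention and normalization errors by cross-checking the computed $a_i$ against the point counts already recorded in \S\ref{sec:numerical data} (see Table~\ref{table:weil polynomials}), and by confirming the exceptional triple \eqref{eq:cycle1 prelim2} independently by hand.
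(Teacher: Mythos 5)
Your proposal matches the paper's approach: the lemma is simply ``read off'' from the explicit list of $208$ Weil-polynomial pairs in \cite[Theorem~1.3(b)]{part1}, i.e.\ verified as a finite computational check exactly as you describe, with the place counts $a_i$ recovered from the point counts via $N_n = \sum_{m \mid n} m\,a_m$ and the single equality case \eqref{eq:cycle1 prelim2} emerging as the unique exception. Your handling of the vanishing half-weight term for odd $d$ and your cross-check against Table~\ref{table:weil polynomials} are both consistent with the paper's conventions.
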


While we can mostly ignore the case $d=2$ because any quadratic extension is automatically cyclic, we will need a few features of that case in order to analyze larger $d$.
\begin{remark} \label{R:genus 2 data}
In case $d=2$, the following statements hold.
\begin{itemize}
\item
For $(g,g') = (2,3)$, we have 
\[
(\#C(\FF_2), \#C(\FF_4); \#C'(\FF_2)) \in \{ (2,8; 0), (4,8; 2)\};
\]
moreover, by \cite[Theorem~1.3(c)]{part1}, both cases are possible.
\item
For $(g,g') = (3,5)$, we have $\#C'(\FF_2) \leq 2$. Moreover, if $\#C(\FF_2) = 4$ then $\#C(\FF_4) = 8$.
\item
For $(g,g') = (3,6)$, we have $\#C(\FF_2) \geq 4$.
\item
For $(g,g') = (4,7)$, we have $\#C(\FF_2) \geq 3$.
\item
For $(g,g') = (5,9)$, we have $\#C(\FF_2) \geq 2$. Moreover, if $\#C(\FF_2) = 2$ then $\#C'(\FF_4) = 4$.
\end{itemize}
\end{remark}

\begin{remark} \label{R:genus 2 data2}
Remark~\ref{R:genus 2 data} has the following specific consequence which will be of special interest. According to LMFDB (see \avlink{2.2.ab\_c}), there is exactly one curve $C_1$ of genus 2 over $\FF_2$
with $\ptcount{C_1}{2} = (2,8)$.
If $C' \to C_1$ is a degree-2 \'etale covering, then either this covering or its relative quadratic twist has relative class number 1.

By the same token, there is exactly one curve $C_2$ of genus 2 over $\FF_2$
with $\ptcount{C_2}{2} = (4,8)$, namely the quadratic twist of $C_1$.
If $C' \to C_2$ is a degree-2 \'etale covering, then again either this covering or its relative quadratic twist has relative class number 1.
\end{remark}

Table~\ref{table:weil polynomials} summarizes the possible point counts for $C$ and $C'$ when $d > 2$; we include only information needed in our proofs, leading to some gaps in the table.

\begin{table}[ht]
\Small
\begin{tabular}{c|c|c|c|c|c|c|c}
& $d$ & $g$ & $g'$ & $\#J(C)(\FF_2)$ & $\#J(C)(\FF_4)$ & $\#C(\FF_{2^i})$ & $\#C'(\FF_{2^i})$ \\
\hline
(1) & $3$ & $2$ & $4$ & 3/9/7/13 & & & $0/1$ \\
(2) & $3$ & $2$ & $4$ & & & $3$ & $0$ \\
\hline
(3) & $3$ & $2$ & $6$ & & & $3$/$4$/$5$ & $0, 2$ \\
\hline
(4) & $3$ & $3$ & $7$ & 23/27 & & $4$ & \\
(5) & $3$ & $3$ & $7$ & & & $2,8,14$ & $0,0$  \\
(6) & $3$ & $3$ & $7$ & & & $3,7$ & $0,0$ \\
(7) & $3$ & $3$ & $7$ & & & $4,6/8$ & $0,0$ \\
(8) & $3$ & $3$ & $7$ & & & $4,12$ & $0,6$ \\
(9) & $3$ & $3$ & $7$ & & & $5$ & $1,1$ \\
(10) & $3$ & $3$ & $7$ & & & $5,9$ & $1,3$ \\
(11) & $3$ & $3$ & $7$ & & & $6$ & $2,2,14$ \\
\hline
(12) & $3$ & $4$ & $10$ & & & $6,6$ & $0,0$ \\
(13) & $3$ & $4$ & $10$ & & & $7$/$8$ & $0$ \\
\hline
\hline
(14) & $4$ & $2$ & $5$ & & & & $1,*,1$ \\
(15) & $4$ & $2$ & $5$ & $4$ & $40$ & $2,8$ & $0,0$ \\
(16) & $4$ & $2$ & $5$ & $8$ & $16$ & $4,4$ & $0,4$ \\
(17) & $4$ & $2$ & $5$ & $10$ & $40$ & $4,8$ & $0,8$ \\
\hline
(18) & $4$ & $2$ & $6$ & & & $5$ & $1,1, *, 17$ \\
\hline
(19) & $4$ & $3$ & $9$ & $36$ & & $5, 9$ & $0, 0, *, 28$ \\
(20) & $4$ & $3$ & $9$ & $50$ & $200$ & $6, 8$ & $0, 0$ \\
\hline
\hline
(21) & $5$ & $2$ & $6$ & &   & $4$ & $0,6,0,18,0$ \\
(22) & $5$ & $2$ & $6$ & $11$ & & $4,10,7$ & $0,2,15$ \\
(23) & $5$ & $2$ & $6$ &  $19$ & & $6,6$ & $1,3,7$ \\
(24) & $5$ & $2$ & $6$ &  $15$ & & $5,9,5$ & $0,6,3$ \\
(25) & $5$ & $2$ & $6$ & $5$ & & $3,5,9,33,33$ & $0,0,0,20,15$ \\
(26) & $5$ & $2$ & $6$ &    & & $4,8,10,24,14$ & $0,0,15,20,20$ \\
(27) & $5$ & $2$ & $6$ &  $15$ & & $5,9,5,17,25$ & $0,10,0,10,25$ \\
(28) & $5$ & $2$ & $6$ &    & & $3,7,9,31,33,43,129$ & $0,0,9,8,30,33,168$ \\
\hline
\hline
(29) & $6$ & $2$ & $7$ & 13 & & $5,5$ & $0,2$ \\
(30) & $6$ & $2$ & $7$ & 15 & & $5$ & $1,1,1$\\
(31) & $6$ & $2$ & $7$ & 19 & & $6,6$ & $0,2/4$ \\
(32) & $6$ & $2$ & $7$ & & & $4,8,10,24,14,56$ & $0,0,6,8,30,24$\\
(33) & $6$ & $2$ & $7$ & & & $5,7,11,15,15$ & $0,2,6,10,5$ \\
(34) & $6$ & $2$ & $7$ & 13 & & $5,5,17,9,25,65$ & $0,0,12,4,15,90$\\
\hline
\hline
(35) & $7$ & $2$ & $8$ & $14$ & & $5,7$ & $0,0,0,0,0,84,133,336$
\end{tabular}
\medskip
\caption{Options for the point counts of $C$ and $C'$ in Theorem~\ref{T:purely geometric bounds} when $d > 2$. We have omitted some data not used in the proof of Theorem~\ref{T:cyclic}.}
\label{table:weil polynomials}
\end{table}

\section{Splitting types and splitting sequences}
\label{sec:splitting}

We now formalize the main strategy used in the arguments.
\begin{definition}
For a place of $F$ which does not ramify in $F'$, the \emph{splitting type} of this place
is the partition of $d$ corresponding to the Frobenius conjugacy class of the place in the symmetric group $S_d$;
in other words, it records the relative degrees of the places of $F'$ lying above the original place.
When describing a splitting type, we write $a^b$ to represent $b$ copies of $a$ in the partition.

We then define the \emph{splitting sequence} of $F'/F$
as the sequence $(s_1, s_2, \dots)$
in which $s_i$ is the multiset of splitting types of degree-$i$ places of $F$.
When describing a splitting sequence, we write $(\times n)$ after a partition to indicate that it occurs with multiplicity $n>1$.
\end{definition}

As a first application of the strategy, we record the following consequence of Lemma~\ref{lem:cycle1 prelim}.
\begin{lemma} \label{lem:cycle1}
There exists at least one degree-$1$ place of $F$ which lifts to a degree-$d$ place of $F'$.
\end{lemma}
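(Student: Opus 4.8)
The plan is to count how many degree-$1$ places of $F$ \emph{fail} to lift to a degree-$d$ place of $F'$, and to show via Lemma~\ref{lem:cycle1 prelim} that this number is strictly smaller than $a_1(F)$. The first step is to characterize the favorable places: a degree-$1$ place $P$ of $F$ lifts to a degree-$d$ place exactly when it is unramified with a unique place of relative degree $d$ above it (i.e.\ inert). Indeed, in any other configuration there are either at least two places above $P$, each then of degree at most $d-1$, or a single ramified place, of degree $d/e \le d/2$; either way every place of $F'$ above $P$ has degree strictly less than $d$.

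The combinatorial heart is to attach to each non-inert degree-$1$ place $P$ a unit of "weight" drawn from the small-degree places of $F'$. Writing the places above an unramified $P$ as a partition of $d$ with at least two parts, the smallest part is at most $\lfloor d/2\rfloor$; a short split into the cases $d$ odd and $d$ even shows that this smallest part is in fact at most $\lfloor(d-1)/2\rfloor$ \emph{unless} $d$ is even and the partition is $(d/2,d/2)$. In the first case $P$ contributes a place of $F'$ of degree $\le \lfloor(d-1)/2\rfloor$, and in the exceptional even case $P$ contributes two places of degree $d/2$; since $d/2 > \lfloor(d-1)/2\rfloor$, these two ranges are disjoint, and the latter case is exactly what the coefficient $\tfrac12$ on $a_{d/2}(F')$ accounts for. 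Because places of $F'$ lying over distinct places of $F$ are distinct, summing these contributions bounds the number of non-inert degree-$1$ places by the right-hand side of \eqref{eq:cycle1 prelim}; together with the strict inequality of Lemma~\ref{lem:cycle1 prelim} this produces an inert place in every case outside \eqref{eq:cycle1 prelim2}.

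The main obstacle is that ramified degree-$1$ places must be folded into the same tally, and exactly one configuration is deficient: when $d$ is even and $P$ consists of a single ramified place with $e=2$ and residue degree $f=d/2$, it yields only one place of degree $d/2$, i.e.\ weight $\tfrac12$ rather than $1$. I would rule this out using Riemann--Hurwitz. Since $q_F=2$, such a place is wildly ramified, so its different exponent is at least $2$, and as its degree is $d/2\ge 2$ it contributes at least $d\ge 4$ to $\deg\mathfrak{d}$. On the other hand, for each even $d>2$ appearing in Table~\ref{table:genus triples} the relation $2g'-2 = d(2g-2)+\deg\mathfrak{d}$ forces $\deg\mathfrak{d}\in\{0,2\}$, so no such place can occur. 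Hence every ramified degree-$1$ place still contributes weight at least $1$, and the count of the previous paragraph is valid as stated.

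Finally, the exceptional tuple \eqref{eq:cycle1 prelim2} is handled by direct enumeration, since there the inequality of Lemma~\ref{lem:cycle1 prelim} degenerates to an equality. Here $d=5$ and $a_1(F')=a_3(F')=0$, so $F'$ has no place of degree $1$ or $3$. Any non-inert degree-$1$ place of $F$ would produce such a place: the only way to write $\sum_i e_i f_i = 5$ with every residue degree $f_i\notin\{1,3\}$ is the inert configuration $(e,f)=(1,5)$. Therefore all five degree-$1$ places of $F$ are inert, and in particular one of them lifts to a degree-$5$ place, completing the proof.
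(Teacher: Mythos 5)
Your proof is correct and follows the same basic strategy as the paper: assume no degree-$1$ place is inert, charge each such place against the budget $\sum_{i \le \lfloor (d-1)/2\rfloor} a_i(F') + \tfrac12 a_{d/2}(F')$, contradict the strict inequality \eqref{eq:cycle1 prelim}, and dispose of the exceptional case \eqref{eq:cycle1 prelim2} by a parity/divisibility argument (your observation that $\sum e_i f_i = 5$ is unsolvable with all $f_i \in \{2,4\}$ is the same point as the paper's ``2 does not divide 5,'' phrased slightly more cleanly). Where you genuinely add something: the paper's proof asserts flatly that each non-inert degree-$1$ place, ``whether or not it ramifies,'' yields either a place of degree $< d/2$ or two places of degree exactly $d/2$, which as stated overlooks the configuration of a single ramified place with $e = 2$, $f = d/2$ (weight $\tfrac12$, not $1$). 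You correctly identify this as the one deficient case and close it with Riemann--Hurwitz: in characteristic $2$ such ramification is wild, so the different exponent is at least $2$ and the contribution to $\deg\mathfrak{d}$ is at least $d \ge 4$, whereas the even-$d>2$ rows of Table~\ref{table:genus triples} force $\deg\mathfrak{d} \in \{0,2\}$ (indeed $0,2,0$ for $d=4$ and $0$ for $d=6$). This is a genuine repair of a step the paper passes over in silence. One small caveat: your patch explicitly requires $d/2 \ge 2$, so the case $d=2$ with a wildly ramified degree-$1$ place ($e=2$, $f=1$, also weight $\tfrac12$) is not excluded by your Riemann--Hurwitz bound, since for some $d=2$ rows $\deg\mathfrak{d}$ can be as large as $4$. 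The paper's proof has exactly the same soft spot at $d=2$, and the gap is immaterial to everything downstream (quadratic extensions are automatically cyclic, and Lemma~\ref{lem:cycle} is vacuous for $d=2$), but since the lemma is stated for all $d>1$ you should either flag $d=2$ as open in your write-up or restrict the statement you prove to $d \ge 3$.
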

\begin{proof}
Suppose the contrary. Each degree-1 place of $F$ which does (resp. does not) ramify in $F'$
lifts either to at least one place of $F'$ of degree strictly less than $d/2$ or to at least one place (resp. at least two places) of degree exactly $d/2$.
However, this contradicts \eqref{eq:cycle1 prelim} save for the exceptional case of Lemma~\ref{lem:cycle1 prelim}. In that case,
\eqref{eq:cycle1 prelim2} tells us that $F'/F$ is everywhere unramified and $F'$ has no places of degree 1 or 3; this leaves no possible splitting types for a degree-1 place other than a single degree-5 place.
\end{proof}

\begin{remark} \label{R:recursive splitting}
Given candidate tuples $\ptcount{C}{n}, \ptcount{C'}{n}$ for some $n$,
identifying the splitting sequences compatible with these values is a combinatorial exercise akin to a pencil-and-paper logic puzzle. While this exercise is pleasant enough in each individual instance, given the number of instances involved it is more reliable to automate this process.
We do this by a recursive procedure: given a candidate for the first $n-1$ terms of the splitting sequence, we iterate over possible $n$-th terms (i.e., $a_{n}(F)$-element multisets of partitions of $d$) to see which ones
give the correct values of $\#C'(\FF_{2^n})$.
This completes all cases of the problem referenced throughout this paper in negligible time.
\end{remark}

Since we are in the special situation where $h_{F'} = h_F$, we can make some additional arguments.
These are not strictly necessary, and indeed are not be used in the uniform application of Remark~\ref{R:recursive splitting}
(Lemma~\ref{L:unified calculation}); however, we will use them in some of the human-readable alternate calculations in order to shorten the arguments.
\begin{remark} \label{R:castelnuovo-severi}
Suppose that $\#J(C)(\FF_2)$ is coprime to $d$. Then the composition
\[
J(C)(\FF_2) \to J(C')(\FF_2) \to J(C)(\FF_2),
\]
in which the first map is pullback of divisors and the second map is pushforward of divisors, is multiplication by $d$ and hence an isomorphism. Since we are requiring $h_{F'} = h_F$, the pullback map is an injection between two finite groups of the same order, and hence also an isomorphism. Consequently, any degree-0 divisor on $C'$ which pushes forward to a principal divisor is itself principal.

This immediately implies that no degree-1 place of $F$ can lift to more than one degree-1 place of $F'$. In a similar vein, if some degree-3 place of $F$ lifts to 4 or more degree-3 places of $F'$, then $C'$ admits a $g^r_3$ with $r \geq 2$, which implies $g' \leq 1$.

We can also use this logic in conjunction with the  Castelnuovo--Severi inequality \cite[Theorem~3.11.3]{stichtenoth}: if $C'$ admits two maps to $\PP^1$ which are incommensurable (in that they do not factor through a common map which is not an isomorphism) of degrees $d_1$ and $d_2$, then $g' \leq (d_1-1)(d_2-1)$. This implies that if there are two different degree-3 places of $F$, each of which lifts to 2 or more degree-3 places of $F'$, then
$g' \leq 4$.
\end{remark}

\section{Subfields of the Galois closure}
\label{sec:subfields}

Let $F''$ be the Galois closure of $F'/F$ and let $C''$ be the associated curve.
Let $G$ be the Galois group of $F''/F$, viewed as a transitive subgroup of $S_d$.
Building on \cite[Lemma~8.2]{part1},
it will be profitable to consider some other subfields of $F''$ and the covers of $C$ corresponding to them. In doing so, we must keep in mind that these fields need not be purely geometric extensions of $F$; see for example Remark~\ref{rmk:constant resolvent}.

\begin{definition} \label{D:A5 char table}
Write $q''$ for $q_{F''}$.
Let $G_0$ be the Galois group of $F''$ over $F\cdot \FF_{q''}$.
Then $G_0$ is a normal subgroup of $G$ and $G/G_0 \cong \Gal(\FF_{q''}/\FF_q)$ is cyclic.

Let $\chi_0, \chi_1, \dots$ be the nontrivial irreducible characters of $G$ over $\QQ$,
numbered so that $\chi_0$ is the restriction of the standard representation of $S_d$.
For each $i$, let $d_i$ be the dimension of each irreducible \emph{complex} representation of $G_0$ appearing in $\chi_i$. Then writing $\Res$ for Weil restriction, we have an isogeny decomposition
\begin{equation} \label{eq:isogeny decomp1}
\Res_{\FF_{q''}/\FF_q} J(C'') \sim J(C) \times \prod_{i\geq 0} B_i^{d_i}
\end{equation}
of abelian varieties; when $C' \to C$ is \'etale, we have $\dim(B_i) = (\dim \chi_i)(g-1)$.
In particular, $B_0$ is the (generalized) Prym variety $A$ of the original cover $C' \to C$.

Following the convention of \cite{part1}, we write $T_{*, q}$ for the $q$-Frobenius trace of $*$ (which could be either a curve or an abelian variety); for every positive integer $n$,
\begin{equation} \label{eq:trace of Prym}
\#C'(\FF_{q^n}) = \#C(\FF_{q^n}) - T_{A,q^n}.
\end{equation}

More generally, if $H$ is a subgroup of $G$ which surjects onto $G/G_0$, then
\begin{equation} \label{eq:isogeny decomp2}
J(C''/H) \sim J(C) \times \prod_{i \geq 0} B_i^{c_i}
\end{equation}
where $c_i$ is the multiplicity of $\chi_i$ in the representation of $G$ induced from the trivial representation of $H$. For every positive integer $n$,
\begin{equation} \label{eq:trace of Prym2}
\#(C''/H)(\FF_{q^n}) = \#C(\FF_{q^n}) - \sum_i c_i T_{B_i, q^n}.
\end{equation}
\end{definition}

\begin{definition}
An important special case of Definition~\ref{D:A5 char table} occurs when $G_0 \not\subseteq A_d$ and $H = G_0 \cap A_d$. In this case, the function field of $C''/H$ is the \emph{quadratic resolvent} of $F'/F$.
The decomposition \eqref{eq:isogeny decomp2} reduces to $J(C''/H) \sim J(C) \times B$ where $B$ is the Prym variety of $C''/H \to C$; in the notation of Definition~\ref{D:A5 char table}, $B$ corresponds to the sign representation of $G_0$.
An unramified splitting type for $C' \to C$ corresponds to the splitting type $1^2$ or $2$ for
the quadratic resolvent
according to whether it is the cycle type of an even or odd permutation (i.e., whether or not the number of parts in the partition has the same parity as $d$).
\end{definition}

\begin{remark} \label{rmk:genus 2 double cover}
If the quadratic extension is an \emph{everywhere unramified} quadratic extension,  then by class field theory,
\begin{equation} \label{eq:double cover cft}
\#J(C)(\FF_2) \equiv 0 \pmod{2}.
\end{equation}
\end{remark}

\begin{remark} \label{rmk:constant resolvent}
We must account for the possibility that the quadratic resolvent is a constant extension rather than a purely geometric one. In this case, the Frobenius conjugacy class of a degree-$i$ place of $F$ must be odd if $i$ is odd and even if $i$ is even. That is, $B$ is the quadratic twist of $J(C)$.
\end{remark}

From Lemma~\ref{lem:cycle1} and Remark~\ref{rmk:constant resolvent}, we immediately deduce the following.
\begin{lemma} \label{lem:cycle}
The group $G$ contains a $d$-cycle. Consequently:
\begin{enumerate}
\item[(a)] if $d$ is even, then $G \not\subseteq A_d$;
\item[(b)] if $d$ is odd, then either $G \subseteq A_d$ or 
the quadratic resolvent is purely geometric.
\end{enumerate}
\end{lemma}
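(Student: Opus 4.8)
The plan is to read off the $d$-cycle directly from the place guaranteed by Lemma~\ref{lem:cycle1}, and then extract the two consequences from the sign of a $d$-cycle. First I would fix the degree-$1$ place $P$ of $F$ that lifts to a single degree-$d$ place $Q$ of $F'$. Since $P$ has just one place above it in $F'$, of residue degree $d$, the fundamental identity $\sum_j e_j f_j = d$ forces $e_Q = 1$ and $f_Q = d$; in particular $P$ is unramified in $F'$, so its splitting type is defined and equals the partition $(d)$ with a single part.

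The core step is to promote this splitting type to an element of $G$. By definition the splitting type records the cycle type of the Frobenius conjugacy class acting on the $d$ cosets $G/G_1$, where $G_1$ is the stabilizer; a splitting type $(d)$ means this action has a single orbit. To see that Frobenius is genuinely a $d$-cycle in $G$, I would first check that $P$ is unramified not merely in $F'$ but in the full Galois closure $F''$: if $I$ denotes the inertia group at a place of $F''$ above $P$, then $e_Q = 1$ says $I$ acts trivially on $G/G_1$, while the action of $G$ on $G/G_1$ is faithful because $F''$ is the Galois closure (so $G_1$ is core-free). Hence $I$ is trivial, the decomposition group at $P$ is the cyclic group generated by $\mathrm{Frob}_P$, and since this group acts transitively on the $d$ cosets, $\mathrm{Frob}_P$ is a $d$-cycle. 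Thus $G$ contains a $d$-cycle.

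With the $d$-cycle in hand, both consequences are parity computations, since a $d$-cycle has sign $(-1)^{d-1}$. For (a), when $d$ is even the $d$-cycle is an odd permutation, so $G$ contains an odd permutation and $G \not\subseteq A_d$. For (b), when $d$ is odd the $d$-cycle is even, so it gives no direct obstruction; instead I would argue by cases. Either $G \subseteq A_d$, or $G \not\subseteq A_d$, in which case the quadratic resolvent is a nontrivial quadratic extension. In the latter case I would invoke Remark~\ref{rmk:constant resolvent}: were the resolvent a constant extension, the Frobenius of every odd-degree place, in particular that of our degree-$1$ place $P$, would have to be an odd permutation, contradicting the fact that its Frobenius is an (even) $d$-cycle. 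Hence the resolvent is purely geometric.

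I expect the only real subtlety to be the reduction in (b): the $d$-cycle alone is parity-neutral when $d$ is odd, so the argument genuinely needs the constant-versus-geometric dichotomy of Remark~\ref{rmk:constant resolvent} and the parity constraint it imposes on odd-degree places. The passage from \emph{splitting type $(d)$} to \emph{$d$-cycle in $G$}, while requiring the small check that $P$ is unramified in $F''$, is otherwise routine.
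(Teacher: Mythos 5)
Your proposal is correct and follows essentially the same route as the paper, which simply states that the lemma is ``immediately deduced'' from Lemma~\ref{lem:cycle1} together with the parity of a $d$-cycle and Remark~\ref{rmk:constant resolvent}. Your added verification that the place is unramified in the Galois closure $F''$ (inertia acts trivially on $G/G_1$ and the action is faithful since $G_1$ is core-free) is exactly the standard detail the paper leaves implicit, and the rest of your argument matches the intended one step for step.
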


\begin{remark} \label{R:basic paradigm}
We can now describe the basic paradigm that we will use to address the cases $d=3,4,5,6$ of
Theorem~\ref{T:cyclic}. We start with an option for the Weil polynomials of $C$ and $C'$.
We then use the process indicated in Remark~\ref{R:recursive splitting} to compute possible values of the first few terms of the splitting sequence consistent with the point counts of $C$ and $C'$,
any known limitations on $G$ (which in some cases get stricter as we compute more terms), and the constraint that the traces of any abelian variety known to occur as isogeny factors of $J(C'')$
must come from a Weil polynomial of the appropriate degree (see below).
\end{remark}

\begin{remark} \label{R:extreme traces}
The set of Weil polynomials corresponding to abelian varieties of dimension $\leq 6$ over $\FF_2$ can be recovered using \SageMath{} as in \cite{part1}. The results have also been tabulated in LMFDB;
note that to look up $T_{A,q}$ for an abelian variety $A$ in LMFDB, one should look at the ``number of points on the curve'' over $\FF_q$ which computes $q+1-T_{A,q}$.

For the benefit of the human reader, we spell out some of the most relevant constraints on the traces of an abelian variety $A$ over $\FF_2$.
\begin{itemize}
\item
We have  
\begin{equation} \label{eq:trace upper bound}
|T_{A,2}| \leq 2 \dim(A)
\end{equation}
with equality iff $A$ is isogenous to a power of an elliptic curve with trace $\pm 2$
(e.g., by \cite[Theorem~2.1.1]{serre-rational}).
When equality occurs, we have $T_{A,4} = 0$.
\item
For $\dim(A) = 2$,
\begin{align}
\label{eq:dim 2 trace -3}
T_{A,2} = -3 &\Longrightarrow T_{A,4} \in \{-1, -3\}, \\
\label{eq:dim 2 trace -2}
T_{A,2} = -2 &\Longrightarrow T_{A,4} \in \{-6,-4,-2,0\}.
\end{align}
\end{itemize}
\end{remark}
\section{Subfields by degree}

We now make explicit some consequences of the discussion from \S\ref{sec:subfields} for $d=3,4,5,6$,
then describe a unified calculation that addresses most cases.

\begin{remark}
Suppose that $d=3$ and $G_0 = G = S_3$. 
The equation \eqref{eq:trace of Prym2} specializes to
\begin{equation} \label{eq:cubic trace of resolvent}
\#C''(\FF_q) = \#C(\FF_q) - 2 T_{A,q} - T_{B,q}.
\end{equation}
In particular, if $\#C'(\FF_q) = 0$, then $\#C''(\FF_q) = 0$ and combining \eqref{eq:trace of Prym} with \eqref{eq:cubic trace of resolvent} yields 
\begin{equation} \label{eq:cubic trace of resolvent2}
T_{A,q} = -T_{B,q} = \#C(\FF_q).
\end{equation}
\end{remark}

\begin{remark}
Suppose that $d=4$ and $G = S_4$. The maximal constant subextension of $F''/F$ is cyclic, so it must equal either $F$ or the quadratic resolvent. 

Suppose now that $C' \to C$ is \'etale. If the quadric resolvent is constant, then
the cubic resolvent is a cyclic cubic \'etale cover of $C_{\FF_4}$, forcing
\begin{equation} \label{eq:A4 mod 3 condition}
\#J(C)(\FF_4) \equiv 0 \pmod{3}.
\end{equation}
Similarly, if the quadratic resolvent is purely geometric, then it admits a cyclic cubic \'etale cover, forcing
\begin{equation} \label{eq:S4 mod 3 condition}
\#J(C)(\FF_2) \#B(\FF_2) \equiv 0 \pmod{3}.
\end{equation}
\end{remark}

\begin{remark} \label{R:degree 5}
Suppose that $d=5$, $G =A_5$, and $C' \to C$ is \'etale. Since $A_5$ is simple, this implies $G_0 = A_5$.
We may number the characters in Definition~\ref{D:A5 char table} so that $\dim(B_1) = 5(g-1)$, $\dim(B_2) = 6(g-1)$.
Then \eqref{eq:trace of Prym2} specializes to
\begin{align}
\#(C''/D_5)(\FF_{q}) &= \#C(\FF_{q}) - T_{B_1, q} \label{eq:trace of Prym2 A51}\\
\#(C''/A_3)(\FF_{q}) &= \#C(\FF_{q}) - 2 T_{A,q} - T_{B_1, q} - T_{B_2, q}.
\label{eq:trace of Prym2 A52}
\end{align}
The conversion of splitting types from $C'$ to these quotients is as follows:
\begin{center}
\begin{tabular}{c|c|c}
Type in $C'$  & Type in $C''/D_5$ & Type in $C''/A_3$ \\
\hline
$5$ & $5+1$ & $5^4$ \\
$3+1^2$ & $3^2$ & $3^6 + 1^2$ \\
$2^2+1$ & $2^2 + 1^2$ & $2^{10}$ \\
$1^5$ & $1^6$ & $1^{20}$
\end{tabular}
\end{center}
\end{remark}

\begin{remark} \label{rmk:degree 6}
Suppose that $d=6$, $G = S_6$, and $C' \to C$ is \'etale. The maximal constant subextension of $F''/F$ is cyclic, so it must equal either $F$ or the quadratic resolvent. In either case, we may number the characters 
in Definition~\ref{D:A5 char table} so that $\chi_1$
is the image of $\chi_0$ under the action of an outer automorphism of $S_6$; then $\dim(B_1) = 5(g-1)$. 
The quotient by $C''$ by the image of $S_5$ under an outer automorphism is a cover of $C$ whose Jacobian is isogenous to $J(C) \times B_1$; we call this the \emph{sextic twin} of the original cover. Note that even if the quadratic resolvent is constant, the outer automorphism of $S_6$ preserves $A_6$, so the sextic twin descends canonically to $\FF_2$.

For reference, we list here the possible splitting types for unramified places and the effect of an outer automorphism on these types.
\begin{align*}
\mbox{odd}: & \qquad 6 \leftrightarrow {3+2+1}, \qquad {4+1^2}, \qquad {2^3} \leftrightarrow {2+1^4}, \\
\mbox{even}: & \qquad {5+1}, \qquad {4+2}, \qquad {3^2} \leftrightarrow {3+1^3}, \qquad {2^2+1^2}, \qquad {1^6}.
\end{align*}
\end{remark}

\begin{remark} \label{rmk:degree 6 pgl25}
Suppose that $d=6$, $G = \PGL(2, 5)$, and $C' \to C$ is \'etale. The splitting types $3+2+1$, $2+1^4$, $4+2$, and $3+1^3$
cannot occur,
as these correspond via the outer automorphism to splitting types with no singletons.

Following the model of Remark~\ref{rmk:degree 6}, we may construct a sextic twin, but in this case it is reducible: it is the disjoint union of $C$ with a degree-5 cover. The Jacobian of the sextic twin is isogenous to $J(C)^2 \times B_1'$ with $\dim(B_1') = 4(g-1)$.
\end{remark}

Implementing Remark~\ref{R:basic paradigm}, we obtain the following via a unified calculation.
We will also step through the cases individually in subsequent sections; this will help to illustrate
why some of the conditions appear.
\begin{lemma} \label{L:unified calculation}
Consider a candidate pair of point count sequences
\[
\ptcount{C}{\infty}, \qquad \ptcount{C'}{\infty}
\]
from \cite[Theorem~1.3(b)]{part1} for which $3 \leq d \leq 6$ and $g' = dg-d+1$. 
Choose $G \subseteq S_d$ such that
\[
(d,G) \in \{(3, S_3), (4, S_4), (5, A_5), (6, S_6), (6, \PGL(2,5))\}.
\]
If $d$ is odd, set $G_0 := G$; otherwise, choose $G_0 \in \{G, G \cap A_d\}$.
Then there is no splitting sequence of length $7$ compatible with the point counts and all of the following restrictions.
\begin{itemize}
\item
Restrictions on $J(C)$:
\begin{itemize}
\item
If $G_0 \not\subseteq A_d$, then the mod-$2$ congruence \eqref{eq:double cover cft} holds.
\item
If $d=4$ and $G_0 = A_4$, then the mod-$3$ congruence \eqref{eq:A4 mod 3 condition} holds.
\end{itemize}
\item
Restrictions on splitting types:
\begin{itemize}
\item
If $G \neq G_0$, then Remark~\ref{rmk:constant resolvent} applies.
\item
If $(d,G) = (5, A_5)$, then the splitting types ${4+1}, {3+2}, {2+1^3}$ do not occur (Remark~\ref{R:degree 5}).
\item
If $(d,G) = (6, \PGL(2,5))$, then the splitting types ${4+2}, {3+2+1}$, ${3+1^3}, {2+1^4}$ do not occur (Remark~\ref{rmk:degree 6 pgl25}).
\end{itemize}
\item
Compatibility with Weil polynomials:
\begin{itemize}
\item
If $G_0 \not\subseteq A_d$, then the traces of $B$ are compatible with a Weil polynomial.
Moreover, if $g=2$ and $\ptcount{C}{2} \in \{(2,8), (4,8)\}$, then $T_{B,2} \in \{\pm 2\}$ (Remark~\ref{R:genus 2 data2}).
\item
If $d=5$, then the traces of $B_1$ and $B_2$ are compatible with Weil polynomials (Remark~\ref{R:degree 5}).
\item
If $d=6$ and $G_0 = G$, then the traces of $B_1$ (if $G = S_6$) or $B_1'$ (if $G = \PGL(2,5)$)
are compatible with a Weil polynomial (Remark~\ref{rmk:degree 6}, Remark~\ref{rmk:degree 6 pgl25}).
\end{itemize}
\end{itemize}
\end{lemma}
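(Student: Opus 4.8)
The plan is to recognize this as a finite verification carried out by the recursive procedure of Remark~\ref{R:recursive splitting}, organized so that the hypotheses act as pruning conditions that ultimately empty the search. Since $g' = dg - d + 1$, the cover $C' \to C$ is everywhere unramified, so the splitting sequence $(s_1, \dots, s_7)$ is well-defined and each $s_i$ is an $a_i(F)$-element multiset of partitions of $d$; as each $a_i(F)$ is read off from the values $\#C(\FF_{2^j})$ for $j \mid i$, the search space is finite. First I would build the sequence term by term: given $(s_1, \dots, s_{n-1})$, iterate over candidate $s_n$ and keep exactly those for which the resulting value of $\#C'(\FF_{2^n})$ matches the prescribed point count, discarding the rest before extending further.

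At each step I would impose the remaining hypotheses as additional filters. Only cycle types realized in the chosen $G$ may appear in any $s_i$; when $G \neq G_0$ the parity of each type is further constrained by Remark~\ref{rmk:constant resolvent}, and for $(d,G) = (5, A_5)$ and $(6, \PGL(2,5))$ the explicitly forbidden types are removed at the outset. The divisibility conditions \eqref{eq:double cover cft} and \eqref{eq:A4 mod 3 condition} on $\#J(C)(\FF_2)$ and $\#J(C)(\FF_4)$ are tested once against the given Weil polynomial of $C$ and either discard the relevant value of $G_0$ or hold automatically.

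The crux, which is what actually empties the search, is the Weil-polynomial compatibility of the auxiliary isogeny factors. Using the decomposition \eqref{eq:isogeny decomp2} together with the induced-character multiplicities $c_i$, each partial splitting sequence determines the point counts $\#(C''/H)(\FF_{2^n})$ of the relevant intermediate quotients via \eqref{eq:trace of Prym2}, and hence, by inverting the resulting linear relations, the traces $T_{B_i, 2^n}$ of the factors $B_i$ of dimension $d_i(g-1)$ in \eqref{eq:isogeny decomp1}. The conversion tables of Remark~\ref{R:degree 5} and their analogues for the quadratic, cubic, and sextic resolvents record exactly how each splitting type of $C' \to C$ contributes to these counts. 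For each $i$ I would then ask whether the trace vector $(T_{B_i, 2}, \dots, T_{B_i, 2^7})$ arises as the first seven Frobenius power sums of some abelian variety of dimension $d_i(g-1)$ over $\FF_2$, deciding this by exhausting over Weil polynomials with \SageMath{} as in Remark~\ref{R:basic paradigm} via the algorithm of \cite{kedlaya-root}. In the genus-$2$ subcases with $(\#C(\FF_2), \#C(\FF_4)) \in \{(2,8),(4,8)\}$ the Prym trace is pinned still more tightly to $T_{B,2} \in \{\pm 2\}$ by Remark~\ref{R:genus 2 data2}. Any partial sequence for which some factor fails its test is discarded, and running the procedure to length $7$ over every candidate point-count pair and every admissible $(d, G, G_0)$ should leave nothing behind.

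The main obstacle is not any individual step but the correctness and reliability of the trace bookkeeping: one must get right, for each subgroup $H$ and each cycle type, the number of places of $C''/H$ above a place of $C$, and correctly separate the several factors $B_i$ when $G = A_5$ contributes two of them and when the choice of $G_0$ changes which representations are geometric. Conceptually, the reason no candidate survives is the slogan of the introduction: a non-cyclic $G$ forces extra isogeny factors whose traces are pinned down by the same extreme point counts that allow $C'$ to exist at all, leaving no room to place those traces within the Weil bound. I expect the binding constraints to come from the quotients of largest genus---the degree-$6$ and degree-$10$ quotients for $A_5$ and the sextic twins for $S_6$ and $\PGL(2,5)$---where the relevant $B_i$ have the most eigenvalues and the power-sum data is most restrictive.
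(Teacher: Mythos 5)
Your proposal matches the paper's proof in essence: the paper gives no written argument beyond ``Implementing Remark~\ref{R:basic paradigm}, we obtain the following via a unified calculation,'' i.e., it is exactly the machine verification you describe---recursively extending splitting sequences per Remark~\ref{R:recursive splitting}, pruning with the stated restrictions, recovering the traces of $B$, $B_1$, $B_2$, $B_1'$ from the quotient point counts, and exhausting over Weil polynomials via \cite{kedlaya-root}, as documented in the notebooks in \cite{repo}. Your identification of the \'etale condition from $g' = dg-d+1$, the trace bookkeeping through the resolvents and twins, and the Weil-polynomial compatibility tests as the decisive filter all agree with the paper's implementation.
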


In the remaining sections, we proceed through the values $d>2$ allowed by Theorem~\ref{T:purely geometric bounds}. For ease of reference, when citing Table~\ref{table:weil polynomials} we will specify rows individually or in short ranges using an abbreviated syntax: e.g., ``Table~\ref{table:weil polynomials}(4--6)'' refers to rows (4), (5), and (6) of the table.

\section{Degree 3}

For $d=3$, we rule out $G = S_3$ using an analysis of quadratic extensions.

\begin{lemma} \label{L:degree 3 cyclic}
If $d=3$, then $C' \to C$ is Galois and cyclic.
\end{lemma}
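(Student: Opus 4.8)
The plan is to reduce the statement to ruling out the single non-cyclic possibility. A degree-$3$ extension is Galois exactly when it is cyclic, which happens exactly when $G \subseteq S_3$ equals $A_3 \cong \ZZ/3\ZZ$; the only other transitive option is $G = S_3$. I therefore assume $G = S_3$ and aim for a contradiction. By Lemma~\ref{lem:cycle} the group $G$ contains a $3$-cycle, and since $S_3 \not\subseteq A_3$, part~(b) of that lemma forces the quadratic resolvent to be purely geometric, so $G_0 = S_3$ as well. I then record the $S_3$-equivariant isogeny decomposition $J(C'') \sim J(C) \times A^2 \times B$ from \eqref{eq:isogeny decomp1}, where $A$ is the Prym of $C' \to C$ and $B$ the Prym of the quadratic resolvent, together with the trace identities \eqref{eq:trace of Prym} and \eqref{eq:cubic trace of resolvent}. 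The crucial leverage is \eqref{eq:cubic trace of resolvent2}: whenever $\#C'(\FF_q) = 0$, rational points push down along $C'' \to C'$ so that $\#C''(\FF_q) = 0$, whence $T_{A,q} = -T_{B,q} = \#C(\FF_q)$.

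Next I split according to the genus triples allowed for $d=3$ by Table~\ref{table:genus triples}, namely $(g,g') \in \{(2,4),(3,7),(4,10),(2,6)\}$. The first three satisfy $g' = 3g-2$, so $C' \to C$ is everywhere unramified, the splitting sequence is defined, and Lemma~\ref{L:unified calculation} applies with $(d,G) = (3,S_3)$ and $G_0 = S_3$: it asserts that no length-$7$ splitting sequence is compatible with the point counts together with the accompanying restrictions on $\#J(C)(\FF_2)$ and on the traces of $B$, which is the contradiction sought. To show where the incompatibility originates I would exhibit it on a representative subcase. For $(2,4)$, $B$ is an elliptic curve (as $\dim B = g-1 = 1$), so $|T_{B,2}| \le 2$; but Table~\ref{table:weil polynomials} forces $(\#C(\FF_2),\#C'(\FF_2)) = (3,0)$, and then \eqref{eq:cubic trace of resolvent2} gives $T_{B,2} = -3$, a contradiction. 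For the higher-genus triples the same mechanism applies once one also reads off $T_{B,q}$ for $q=4$ and checks that the resulting trace vector is not realized by a genuine Weil polynomial of dimension $g-1$; for instance in the $(3,7)$ subcase forcing $(T_{B,2},T_{B,4}) = (-2,-8)$, writing $T_{B,4} = (t_1^2+t_2^2) - 8$ with $t_1 + t_2 = T_{B,2}$ shows that $T_{B,4} = -8$ would force $t_1 = t_2 = 0$ and hence $T_{B,2} = 0 \ne -2$. Enumerating these incompatibilities over all subcases is exactly the finite search automated in Lemma~\ref{L:unified calculation} via Remark~\ref{R:recursive splitting}.

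The remaining triple $(2,6)$ has $g' = 6 \ne 3g - 2 = 4$, so $C' \to C$ is ramified; no splitting sequence is defined and Lemma~\ref{L:unified calculation} does not apply, so this case must be treated by hand, and it is the main obstacle. I would still use the exact identities \eqref{eq:trace of Prym} and \eqref{eq:cubic trace of resolvent}, which hold regardless of ramification since they follow from the $S_3$-equivariant isogeny decomposition of $J(C'')$, together with the options $\#C(\FF_2) \in \{3,4,5\}$ and $\#C'(\FF_2) \in \{0,2\}$ from Table~\ref{table:weil polynomials} and the requirement that the traces of $A$ (of dimension $g'-g = 4$) and of $B$ come from honest Weil polynomials. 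The difficulty is that applying a Weil bound to the values of $T_{B,q}$ produced by \eqref{eq:cubic trace of resolvent2} requires knowing $\dim B$, equivalently the genus of the quadratic resolvent curve, and this in turn requires analyzing the ramification of the $S_3$-cover: the inertia groups are cyclic of order $2$ or $3$, and while the order-$3$ contributions are tame, in characteristic $2$ the order-$2$ contributions are wild, so the Riemann--Hurwitz bookkeeping that fixes the genera of the resolvent and of $C''$ is more delicate than in the \'etale cases and must be carried out before the Weil-polynomial incompatibility can be invoked.
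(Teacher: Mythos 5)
Your handling of the three \'etale triples $(2,4)$, $(3,7)$, $(4,10)$ is correct and is essentially the paper's route: the paper likewise delegates them to the uniform calculation (Lemma~\ref{L:unified calculation}), and your spot checks agree with its alternative case-by-case breakdown (the $T_{B,2}=-3$ contradiction for $(2,4)$ is verbatim the paper's argument, and your $t_1^2+t_2^2$ computation correctly reproduces the elimination of $(T_{B,2},T_{B,4})=(-2,-8)$ in the $(3,7)$ case). The genuine gap is the ramified triple $(g,g')=(2,6)$: you correctly isolate it as the one case outside the scope of Lemma~\ref{L:unified calculation}, but you then only describe the difficulty (pinning down $\dim(B)$ in the presence of possibly wild order-$2$ inertia) and explicitly leave the Riemann--Hurwitz bookkeeping uncarried out. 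Since $(2,6)$ is one of the four possibilities allowed by Theorem~\ref{T:purely geometric bounds}, the lemma is not proved as written.

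The missing step is short, and you lost it partly through a misreading of Table~\ref{table:weil polynomials}: the entry ``$0,2$'' in the $(3,2,6)$ row means $\#C'(\FF_2)=0$ \emph{and} $\#C'(\FF_4)=2$ (so $a_2(F')=1$), not $\#C'(\FF_2)\in\{0,2\}$, and that is exactly the data that closes the case. By Riemann--Hurwitz the different has degree $2g'-2-3(2g-2)=4$, so $C'\to C$ ramifies at one or two geometric points of $C'$; as $\#C'(\FF_2)=0$, the ramification must sit at the unique degree-$2$ place of $F'$, and it must be a triple point rather than a double point, since $e=2$ at a degree-$2$ place would force a second degree-$2$ place of $F'$ in the same fiber, contradicting $a_2(F')=1$. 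This disposes of your worry about wild order-$2$ inertia: no order-$2$ inertia occurs, all inertia lies in $A_3$, so the quadratic resolvent $C'''\to C$ is \'etale and $\dim(B)=g-1=1$. Your observation that \eqref{eq:trace of Prym} and \eqref{eq:cubic trace of resolvent} hold without any \'etaleness hypothesis is correct and is all that remains needed: since $\#C'(\FF_2)=0$, \eqref{eq:cubic trace of resolvent2} gives $T_{B,2}=-\#C(\FF_2)\leq -3$, violating the bound $|T_{B,2}|\leq 2\dim(B)=2$ of Remark~\ref{rmk:genus 2 double cover}. With this paragraph inserted, your proof coincides with the paper's.
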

\begin{proof}
Suppose to the contrary that $G = S_3$.
By Theorem~\ref{T:purely geometric bounds}, we have $(g,g') \in \{(2,4), (2, 6), (3,7), (4,10)\}$.
By Lemma~\ref{lem:cycle}, the quadratic resolvent is purely geometric, so $G_0 = S_3$
and the mod-2 congruence \eqref{eq:double cover cft} applies unless $(g,g') = (2,6)$.
Let $C''' := C''/A_3$ be the curve corresponding to the quadratic resolvent.

We first treat the case $(g,g') = (2,6)$.
Table~\ref{table:weil polynomials}(3) shows that 
\[
\#C(\FF_2) \geq 3, \qquad \#C'(\FF_2) = 0, \qquad \#C'(\FF_4) = 2. 
\]
By Riemann--Hurwitz, $C' \to C$ is ramified at either one or two geometric points of $C'$. Since $\#C'(\FF_2) = 0$, $C' \to C$ must ramify at the unique degree-2 place of $F'$; moreover,
this must be a triple point and not a double point (as the latter would force $a_2(F') \geq 2$).
It follows that $C''' \to C$ is \'etale,
and so $\dim(B) = 1$ and $T_{B,2} = -\#C(\FF_2) \leq -3$ by \eqref{eq:cubic trace of resolvent2}, violating \eqref{eq:trace upper bound} 

In the remaining cases, $C' \to C$ is \'etale and we may appeal to the uniform calculation
(Lemma~\ref{L:unified calculation}). 
Alternatively, we may break the individual cases down as follows.
\begin{itemize}
\item
For $(g,g') = (4,10)$, we have $\dim(B) = 3$. From Table~\ref{table:weil polynomials}(12,13),
we have
\[
\#C(\FF_2) \geq 7 \mbox{ or } \ptcount{C}{2} = (6,6), \qquad \#C'(\FF_2) = 0.
\]
By \eqref{eq:cubic trace of resolvent2}, $T_{B,2} \leq -7$ or $T_{B,2} = T_{B,4} = -6$,  contradicting Remark~\ref{R:extreme traces}.

\item
For $(g,g') = (3, 7)$, we have $\dim(B) = 2$.
\begin{itemize}
\item
Table~\ref{table:weil polynomials}(4) is ruled out by the mod-2 congruence \eqref{eq:double cover cft}.
\item
If $\#C'(\FF_2) > 1$, then from Table~\ref{table:weil polynomials}(11),
\[
\#C(\FF_2) = 6, \qquad \ptcount{C'}{3} = (2,2,14).
\]
There are not enough places of $F'$ of degree at most 3 to cover the degree-1 places of $F$.
\item
If $\#C'(\FF_2) = 1$, then some degree-1 place of $F$ has splitting type ${2+1}$, so $\#C'(\FF_4) \geq 3$. 
This rules out Table~\ref{table:weil polynomials}(9); from Table~\ref{table:weil polynomials}(10), 
\[
\ptcount{C}{2} = (5,9), \qquad \ptcount{C'}{2} = (1,3).
\]
The splitting sequence begins
$\{3(\times 4), 2+1\}, \{3(\times 2)\}$.
This means that $\ptcount{C'''}{2} = (8,18)$ and $\tracecount{B}{2} = (-3, -9)$, contradicting \eqref{eq:dim 2 trace -3}.
\item
If $\#C'(\FF_2) = 0$ and $\#C'(\FF_4) > 0$, then from
Table~\ref{table:weil polynomials}(8), 
\[
\ptcount{C}{2} = (4, 12).
\]
By \eqref{eq:cubic trace of resolvent2}, $T_{B,2} = -4$.
By Remark~\ref{R:extreme traces}, $B$ is isogenous to the square of an elliptic curve with trace $-2$,
so the relative quadratic twist of $C'''$ is a degree-2 \'etale cover of $C$ with relative class number 1. However, $\#C(\FF_4) = 12$ which violates Remark~\ref{R:genus 2 data}.

\item
If $\#C'(\FF_2) = \#C'(\FF_4) = 0$, then from
Table~\ref{table:weil polynomials}(5--7),
\[
\ptcount{C}{2} \in \{(2, 8), (3, 7), (4, 2), (4, 8)\}.
\]
By \eqref{eq:cubic trace of resolvent2}, $\tracecount{B}{2} = ({-\#C(\FF_{2^i})})_{i=1}^2$,
contradicting Remark~\ref{R:extreme traces}.
\end{itemize}

\item
For $(g,g') = (2,4)$, we have $\dim(B) = 1$. 
The mod-2 congruence \eqref{eq:double cover cft} rules out Table~\ref{table:weil polynomials}(1).
From Table~\ref{table:weil polynomials}(2), we have 
\[
\#C(\FF_2)  = 3, \qquad \#C'(\FF_2) = 0.
\]
By \eqref{eq:cubic trace of resolvent2}, $T_{B,2} = -3$, contradicting Remark~\ref{rmk:genus 2 double cover}.
 \qedhere
\end{itemize}
\end{proof}

\section{Degree 4}

For $d=4$, we rule out $G = D_4$ using an analysis of quadratic extensions.

\begin{lemma} \label{lem:degree 4 subfield}
If $d=4$, then the Galois group of $F'/F$ cannot equal $D_4$.
\end{lemma}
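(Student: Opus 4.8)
The plan is to exploit the one structural feature that distinguishes $D_4$ among the transitive subgroups of $S_4$ containing a $4$-cycle (the only candidates left by Lemma~\ref{lem:cycle}): the point stabilizer $H$ of order $2$ lies in a \emph{unique} subgroup of order $4$, namely the Klein four-group $V$ spanned by the two transpositions fixing a point, which is normal of index $2$ and is not contained in $A_4$. Hence the fixed field $F_0 := F''^{V}$ is a quadratic extension of $F$ sitting inside $F'$, and it is purely geometric because $F'$ is. So $G = D_4$ forces $C' \to C$ to factor as a tower of two degree-$2$ maps $C' \to C_0 \to C$. I would also record the three index-$2$ subgroups of $D_4$ (the group $V$, the cyclic group of order $4$, and $D_4 \cap A_4$) and the three quadratic extensions of $F$ they cut out, the last being the quadratic resolvent. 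Since $F_0$ is geometric, $G_0 \notin \{V\}$, and in each of the remaining possibilities for $G_0$ at least two of these three quadratic extensions are purely geometric.

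The key leverage is that the relative class number is multiplicative in towers, so $h_{F'/F} = h_{F'/F_0}\,h_{F_0/F} = 1$ forces $h_{F_0/F} = h_{F'/F_0} = 1$. Thus both quadratic steps are purely geometric with relative class number $1$, and each genus pair must occur in Table~\ref{table:genus triples}. Running Riemann--Hurwitz against the admissible $(g,g')$ for $d=4$ pins down $g_{F_0}$: in the case $(2,6)$ the tower must be $(2,3)$ then $(3,6)$, and in the case $(3,9)$ it must be $(3,5)$ then $(5,9)$. These two cases die immediately from Remark~\ref{R:genus 2 data}. For $(2,6)$ the $(2,3)$ step forces $\#C_0(\FF_2)\le 2$ while the $(3,6)$ step forces $\#C_0(\FF_2)\ge 4$. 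For $(3,9)$ the $(3,5)$ step forces $\#C_0(\FF_2)\le 2$ and the $(5,9)$ step forces $\#C_0(\FF_2)\ge 2$, hence $\#C_0(\FF_2)=2$, which in turn forces $\#C'(\FF_4)=4$, contradicting Table~\ref{table:weil polynomials}.

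For the remaining case $(2,5)$ (automatically \'etale, with $g_{F_0}=3$ and tower $(2,3)$ then $(3,5)$) I would sweep the rows of Table~\ref{table:weil polynomials} using three filters. First, the $(2,3)$ step forces $\#C(\FF_4)=8$, so $C$ is one of the two curves of Remark~\ref{R:genus 2 data2}; this eliminates any row with $\#C(\FF_4)\neq 8$. Second, since $C' \to C_0$ is an \'etale double cover, every $\#C'(\FF_{2^i})$ is even, eliminating rows with an odd point count. Third, at least two distinct \emph{geometric} unramified quadratic subcovers of $C$ exist, so by class field theory the $2$-rank of $J(C)(\FF_2)$ is at least $2$; this eliminates rows where the $2$-Sylow of $J(C)(\FF_2)$ is cyclic, in particular the row with $\#J(C)(\FF_2)=10$. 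I expect these filters to clear every row except the one with $(\#C(\FF_2),\#C(\FF_4))=(2,8)$.

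The main obstacle is precisely this residual row, which is internally consistent at every level described above: the tower realizes $(2,3)$ then $(3,5)$, both Pryms have relative class number $1$, and even the two-dimensional $\rho$-isotypic factor of $J(C')$ is forced to have Frobenius traces $0$ and $8$, i.e.\ characteristic polynomial $(T^2-2)^2$, a legitimate supersingular isogeny class over $\FF_2$. To finish it I would fall back on the mechanism of Remark~\ref{R:recursive splitting}: using the full point-count sequences of $C$ and $C'$ determined by their Weil polynomials, enumerate the $D_4$-splitting sequences compatible with the counts, the allowed $D_4$ cycle types, and the constraint that the traces of all relevant Prym factors come from genuine Weil polynomials, and verify that none survives, passing to the relative quadratic twist via Remark~\ref{R:genus 2 data2} where that shortens the bookkeeping. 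This last verification is the crux, and, as elsewhere in the paper, is the step I would delegate to the Weil-polynomial exhaustion rather than carry out entirely by hand.
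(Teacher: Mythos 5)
Your setup (the unique order-$4$ overgroup of the point stabilizer in $D_4$, hence an intermediate field $F_0$ with $g_{F_0}=2g_F-1$; multiplicativity of the relative class number; the Riemann--Hurwitz bookkeeping of towers) is correct and matches the paper, as do your eliminations of $(g,g')=(2,6)$ and $(3,9)$ via Remark~\ref{R:genus 2 data}. The gap is in the case $(2,5)$, in two places. First, your ``filter 3'' rests on a false implication: two distinct geometric unramified quadratic subextensions of $F$ do \emph{not} force the $2$-rank of $J(C)(\FF_2)$ to be at least $2$. They can be relative quadratic twists of one another, cutting out the \emph{same} index-$2$ subgroup of $J(C)(\FF_2)$, with the third quadratic subextension of the biquadratic field $F''^{Z}$ ($Z$ the center of $D_4$) being the constant extension $F\cdot\FF_4$. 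This happens exactly when $G_0\in\{C_4,\,D_4\cap A_4\}$, i.e., when the Galois closure has a nontrivial constant subextension --- a possibility you never excluded (compare the constant/geometric dichotomy the paper must entertain in Remark~\ref{rmk:constant resolvent} and Lemma~\ref{lem:degree 4}). So the row with $\#J(C)(\FF_2)=10$, i.e., $(\#C(\FF_2),\#C(\FF_4))=(4,8)$, is not actually eliminated, and two rows, not one, reach your final stage. Second, that final stage is deferred to an unexecuted $D_4$ splitting-sequence enumeration with no evidence it terminates in a contradiction: $D_4$ is conspicuously absent from the paper's unified calculation (Lemma~\ref{L:unified calculation}); the unramified cycle types of $D_4$ contain those of $C_4$, so splitting sequences compatible with the point counts exist whenever the corresponding cyclic cover does; and your own observation that the $2$-dimensional isotypic factor can have characteristic polynomial $(T^2-2)^2$ shows the Weil-polynomial trace constraints you propose to impose are satisfiable. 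As written, the crux case is open.

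The paper closes exactly this case by a different, non-enumerative mechanism that your setup could adopt: Remark~\ref{R:genus 2 data} (and Remark~\ref{R:genus 2 data2}) force $(\#C(\FF_2),\#C(\FF_4))\in\{(2,8),(4,8)\}$, and both of these genus-$2$ curves have $p$-rank $1$; by the Deuring--Shafarevich formula \cite[(7.2)]{part1}, the \'etale intermediate curve $C_0$ then also has $p$-rank $1$. Hence $C_{\overline{\FF}_2}$ and $C_{0,\overline{\FF}_2}$ each admit exactly \emph{one} connected \'etale double cover, whereas a $D_4$ closure would give $C_{0,\overline{\FF}_2}$ two distinct ones (the conjugates of $C'$, distinct because the point stabilizer is not normal); so $F'/F$ is forced to be cyclic. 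This handles both surviving rows at once, needs no group structure of $J(C)(\FF_2)$ beyond the $p$-rank, and avoids the enumeration entirely; replacing your class-field-theoretic $2$-rank filter and the computational fallback with this argument would repair the proof.
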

\begin{proof}
By Theorem~\ref{T:purely geometric bounds}, we have $(g_F,g_{F'}) \in \{(2,5), (2,6), (3,9)\}$. Suppose that $F'/F$ contains an intermediate subfield $E$; then 
$g_E = 2g_F-1$ and both $E/F$ and $F'/E$ are purely geometric quadratic extensions with relative class number 1. Applying Remark~\ref{R:genus 2 data} to these extensions, we rule out the cases
$(g_F, g_{F'}) = (2,6), (3,9)$.
For $(g_F, g_{F'}) = (2,5)$, we see from Remark~\ref{R:genus 2 data} that $C$ has $p$-rank 1, as then does the intermediate curve $C'''$ by the Deuring--Shafarevich formula \cite[(7.2)]{part1}.
In particular, $C_{\overline{\FF}_2}$ and $C'''_{\overline{\FF}_2}$ each admit only one \'etale double cover, which forces $F'/F$ to be cyclic.
\end{proof}

\begin{lemma} \label{lem:degree 4}
If $d=4$, then $C' \to C$ is Galois and cyclic.
\end{lemma}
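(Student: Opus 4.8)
The plan is to rule out all noncyclic possibilities for the transitive group $G \subseteq S_4$ and thereby force $F'/F$ to be cyclic. The transitive subgroups of $S_4$ are $S_4$, $A_4$, $D_4$, the Klein four-group $V_4$, and the cyclic group $C_4$. By Lemma~\ref{lem:cycle}, $G$ contains a $4$-cycle, which immediately eliminates $A_4$ and $V_4$ (neither contains a $4$-cycle). Lemma~\ref{lem:degree 4 subfield} already eliminates $D_4$. So the only noncyclic case left to rule out is $G = S_4$; once that is done, $G$ must be $C_4$ and the extension is Galois and cyclic.

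To dispose of $G = S_4$, I would follow the basic paradigm of Remark~\ref{R:basic paradigm}. By Theorem~\ref{T:purely geometric bounds} the relevant genus pairs are $(g,g') \in \{(2,5),(2,6),(3,9)\}$, and for each I would read off the allowed point counts of $C$ and $C'$ from Table~\ref{table:weil polynomials}. Since $d=4$ is even, Lemma~\ref{lem:cycle}(a) gives $G \not\subseteq A_4$, so the quadratic resolvent is either constant or purely geometric, and I would branch on whether $G_0 = S_4$ or $G_0 = A_4$. First I would separate the ramified case from the \'etale case: when $C' \to C$ is not \'etale, a Riemann--Hurwitz count pins down the ramification (as in the $(2,6)$ analysis for $d=3$), and I would use the resulting forced splitting data together with the trace identity \eqref{eq:trace of Prym} and the bound in Remark~\ref{rmk:genus 2 double cover} to derive a contradiction. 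In the \'etale case I would invoke the uniform calculation Lemma~\ref{L:unified calculation} directly, which enumerates the length-$7$ splitting sequences compatible with the point counts and the Weil-polynomial constraints on $B$ (and, when $G_0 = A_4$, the congruence \eqref{eq:A4 mod 3 condition}), and rules them all out. Alternatively, as in Lemma~\ref{L:degree 3 cyclic}, I would present a human-readable case analysis: use the point counts to constrain the splitting of low-degree places into the quadratic resolvent $C''/H$, compute $(T_{B,2},T_{B,4})$ from \eqref{eq:trace of Prym} applied to the resolvent, and check against the list of Weil polynomials of the appropriate dimension $\dim(B) = g$.

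The main obstacle I expect is the genus pair $(2,6)$, where Table~\ref{table:weil polynomials} supplies only $\#C(\FF_2)$ and a partial $C'$ point-count sequence containing a wildcard ($5$ and $1,1,*,17$), so the data is least constraining and the resolvent $B$ has dimension $2$ with more room in its trace. Here the divisibility hook \eqref{eq:S4 mod 3 condition}, the sharp equality case of Remark~\ref{rmk:genus 2 double cover}, and the special structure forced by Remark~\ref{R:genus 2 data2} (since $(\#C(\FF_2),\#C(\FF_4))$ may land in $\{(2,8),(4,8)\}$, pinning $T_{B,2} \in \{\pm 2\}$) should be the decisive extra inputs. The remaining pairs $(2,5)$ and $(3,9)$ are better constrained: for $(3,9)$ the value $\#J(C)(\FF_2) \in \{36,50\}$ interacts with \eqref{eq:double cover cft} and \eqref{eq:A4 mod 3 condition} to cut down the resolvent type, and for $(2,5)$ the explicit $C'$ point counts leave few candidate splitting sequences. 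In each case the final contradiction comes from exhausting over Weil polynomials of dimension $g$ and finding none matching the forced trace pair $(T_{B,2},T_{B,4})$.
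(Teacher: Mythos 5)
Your reduction to $G = S_4$ and your plan for the \'etale cases track the paper's proof closely: Lemma~\ref{lem:cycle} eliminates $A_4$ and $V_4$ (both lie in $A_4$, which contains no $4$-cycle), Lemma~\ref{lem:degree 4 subfield} eliminates $D_4$, and for the \'etale pairs $(g,g') = (2,5), (3,9)$ the paper likewise either cites Lemma~\ref{L:unified calculation} or runs the hand analysis you describe, branching on whether the quadratic resolvent is constant or purely geometric, with \eqref{eq:A4 mod 3 condition} killing the constant-resolvent subcases and, in the purely geometric $(2,5)$ subcase, \eqref{eq:S4 mod 3 condition} forcing $T_{B,2} = 0$ so that Remark~\ref{R:genus 2 data2} (your $T_{B,2} \in \{\pm 2\}$ observation) eliminates $(\#C(\FF_2),\#C(\FF_4)) \in \{(2,8),(4,8)\}$. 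So your toolkit is the right one; the problem is where you aim it.

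The genuine gap is the pair $(2,6)$, which you single out as the main obstacle and propose to attack with exactly those \'etale tools. For $d=4$ and $(g,g') = (2,6)$ the cover is necessarily ramified: Riemann--Hurwitz gives a different of degree $2\cdot 6 - 2 - 4(2\cdot 2 - 2) = 2 > 0$. Consequently this pair is outside the scope of Lemma~\ref{L:unified calculation}, whose hypothesis $g' = dg - d + 1$ fails; the congruences \eqref{eq:A4 mod 3 condition} and \eqref{eq:S4 mod 3 condition} were derived under the assumption that $C' \to C$ is \'etale; and Remark~\ref{R:genus 2 data2} requires $(\#C(\FF_2), \#C(\FF_4)) \in \{(2,8),(4,8)\}$, whereas Table~\ref{table:weil polynomials} gives $\#C(\FF_2) = 5$ here. (Your assertion that the resolvent Prym has dimension $g$ is also off: for an \'etale quadratic resolvent, $\dim(B) = g-1$, which matters when you exhaust over Weil polynomials; the correct values in the paper are $\dim(B)=1$ for $g=2$ and $\dim(B)=2$ for $g=3$.) What actually disposes of $(2,6)$ is the short ramification argument you only gesture at in your first paragraph, and it needs neither the trace identity \eqref{eq:trace of Prym} nor Remark~\ref{rmk:genus 2 double cover}: since $a_1(F') = 1$ and $a_2(F') = 0$, the (necessarily rational) ramified place of $F'$ can share its fiber neither with a degree-$2$ place nor with further rational places, so it would have to be alone in its fiber, i.e., totally ramified --- and in characteristic $2$ such wild ramification is incompatible with a different of degree $2$. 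With that substituted for your concluding paragraph, the rest of your outline matches the paper's proof.
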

\begin{proof}
Suppose to the contrary that $G \neq C_4$.
By Theorem~\ref{T:purely geometric bounds}, we have $(g,g') \in \{(2,5), (2, 6), (3,9)\}$.
We have $G \neq D_4$ by Lemma~\ref{lem:degree 4 subfield} and
$G \not\subseteq A_4$ by Lemma~\ref{lem:cycle}, so $G = S_4$.
The maximal constant subextension of $F''/F$ is cyclic, and so must equal either $F$ or the quadratic resolvent.
In the former case, let $C''' := C''/A_4$ be the curve corresponding to the quadratic resolvent and let $F'''$ be its function field.

We first treat the case $(g,g') = (2,6)$.
The map $C' \to C$ ramifies at one geometric point, which must be $\FF_2$-rational.
By Table~\ref{table:weil polynomials}(18), we have $a_2(F') = 0$, so the ramified place of $F'$ must be alone in its fiber, violating Riemann--Hurwitz.

In the remaining cases, $C' \to C$ is \'etale and we may appeal to the uniform calculation
(Lemma~\ref{L:unified calculation}). Alternatively, we may break the individual cases down as follows.
If $(g,g') = (2,5)$ and $\#C'(\FF_2) = 1$, then by Table~\ref{table:weil polynomials}(14), $\#C'(\FF_8) = 1$ and there is no way to accommodate the degree-1 place of $F'$ in a fiber. To handle the remaining cases, suppose first that the quadratic resolvent is constant.
\begin{itemize}
\item
For $(g,g') = (3,9)$, 
the mod-3 congruence \eqref{eq:A4 mod 3 condition} rules out Table~\ref{table:weil polynomials}(20).
From Table~\ref{table:weil polynomials}(19),  we have 
\[
\ptcount{C}{2} = (5,9), \qquad
\ptcount{C'}{4} = (0,0,0,28).
\]
By Remark~\ref{rmk:constant resolvent},
the splitting sequence begins $\{4 (\times 5)\}, \{2^2 (\times 2)\}$,
 but this creates too many degree-4 places of $F'$.

\item
For $(g,g') = (2,5)$, the mod-3 congruence \eqref{eq:A4 mod 3 condition} rules out Table~\ref{table:weil polynomials}(15--17).
\end{itemize}

Suppose next that  the quadratic resolvent is purely geometric.
\begin{itemize}
\item
For $(g,g') = (3,9)$, we have $\dim(B) = 2$. 
From Table~\ref{table:weil polynomials}(19,20)
we have 
\[
\#C(\FF_2) \in \{5,6\}, \qquad \#C'(\FF_2) = \#C'(\FF_4) = 0.
\]
Consequently, each degree-1 place of $F$ lifts to a degree-4 place of $F'$,
and hence to a degree-2 place of $F'''$. Since $0 = \#C'''(\FF_2) = \#C(\FF_2) - T_{B,2}$ by \eqref{eq:trace of Prym2}, we have $T_{B,2} = \#C(\FF_2) \geq 5$, contradicting \eqref{eq:trace upper bound}.

\item
For $(g,g') = (2,5)$, we have $\dim(B) = 1$. From Table~\ref{table:weil polynomials}(15--17) we have 
\begin{equation} \label{eq:gg 25}
\ptcount{C}{2} \in \{(2,8), (4,4), (4,8)\}
\end{equation}
and $\#J(C)(\FF_2) \not\equiv 0 \pmod{3}$. 
By the mod-3 congruence \eqref{eq:S4 mod 3 condition}, $\#B(\FF_2) \equiv 0 \pmod{3}$;
by \eqref{eq:trace upper bound} this forces $T_{B,2} = 0$.
By this plus Remark~\ref{R:genus 2 data2} and \eqref{eq:gg 25}, we must have $\ptcount{C}{2} = (4,4)$;
from Table~\ref{table:weil polynomials}(16), we have $\ptcount{C'}{2} = (0,4)$.
Now $\#C'''(\FF_2) = \#C(\FF_2)$ because $T_{B,2} = 0$, so two of the four degree-1 places of $F$ have even splitting types in
$F'$. Since $\#C'(\FF_2) = 0$, the splitting type $2^2$ must occur twice, but this forces the contradiction $\#C'(\FF_4) \geq 8$.
\qedhere
\end{itemize}
\end{proof}

\section{Degree 5}

For $d=5$, we rule out $G \in \{D_5, S_5\}$ using an analysis of quadratic extensions. 
\begin{lemma} \label{lem:degree 5 subfield}
If $d=5$, then the Galois group of $F'/F$ cannot equal $D_5$ or $S_5$.
\end{lemma}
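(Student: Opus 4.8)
The plan is to exploit that, since $(g,g') = (2,6)$ and $2g'-2 = d(2g-2)$, Riemann--Hurwitz makes $C' \to C$ everywhere unramified; hence the Galois closure $C'' \to C$ is \'etale as well, and every intermediate cover is an \'etale (in particular purely geometric) cover of $C$. We treat the two cases $G = D_5$ and $G = S_5$ named in the lemma. In each, the group has an index-$2$ normal subgroup whose fixed field gives an \'etale double cover $C_E \to C$: for $G = D_5$ this is $C_E = C''/C_5$, and for $G = S_5$ it is the quadratic resolvent $C_E = C''/A_5$. Since $g = 2$ and the cover is \'etale, $C_E$ has genus $3$, so its Prym variety $B$, which satisfies $J(C_E) \sim J(C) \times B$ and $\#C_E(\FF_{2^n}) = \#C(\FF_{2^n}) - T_{B,2^n}$, is an elliptic curve. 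Being one-dimensional, $B$ obeys $|T_{B,2}| \le 2$ (consistent with Remark~\ref{rmk:genus 2 double cover}) and, writing $t = T_{B,2}$, $T_{B,4} = t^2 - 4$; these two relations will be the main leverage.

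For $G = D_5$ I expect a clean argument. The only cycle types occurring in $D_5 \subset S_5$ are $1^5$, $5$, and $2^2+1$, and a place of $F$ splits in $C_E$ exactly when its Frobenius lies in $C_5$, i.e.\ for types $1^5$ and $5$, while a place of type $2^2+1$ is inert. A degree-$1$ place contributes to $\#C'(\FF_2)$ only through the size-$1$ parts of its splitting type, so type $1^5$ contributes $5$ and type $2^2+1$ contributes $1$. From Table~\ref{table:weil polynomials} we have $\#C'(\FF_2) \le 1$, which forces every degree-$1$ place of $F$ to be of type $5$ when $\#C'(\FF_2) = 0$, and all but one (the last being of type $2^2+1$) when $\#C'(\FF_2) = 1$. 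In the first case all $\#C(\FF_2) \ge 3$ rational places split, giving $T_{B,2} = -\#C(\FF_2) \le -3$; in the second case (where $\#C(\FF_2) = 6$) all but one split, giving $T_{B,2} = 2 - \#C(\FF_2) = -4$. Either way $|T_{B,2}| > 2$, a contradiction, so $G \neq D_5$.

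For $G = S_5$ the double cover $C_E$ is the quadratic resolvent, which is purely geometric by Lemma~\ref{lem:cycle}(b); thus \eqref{eq:double cover cft} forces $\#J(C)(\FF_2)$ to be even and $B$ to be an elliptic curve as above. Here an unramified place splits in $C_E$ or is inert according to the parity of its Frobenius, so $T_{B,2}$ is the signed excess of odd over even degree-$1$ places, while $T_{B,4}$ is determined analogously by the degree-$1$ and degree-$2$ place parities. I would then run through the rows of Table~\ref{table:weil polynomials} for $d=5$, using Lemma~\ref{lem:cycle1} (so that an even degree-$1$ place of type $5$ is present), the bound $|T_{B,2}| \le 2$, and the elliptic relation $T_{B,4} = T_{B,2}^2 - 4$, together with the splitting sequences compatible with the point counts enumerated as in Remark~\ref{R:recursive splitting}. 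When $|T_{B,2}| = 2$, the relative class number of $C_E/C$ (or of its relative quadratic twist) is $1$, so Remark~\ref{R:genus 2 data2} pins $(\#C(\FF_2), \#C(\FF_4))$ to $(2,8)$ or $(4,8)$ and eliminates most rows at once; the cases $|T_{B,2}| \le 1$ are to be excluded by checking the forced value of $T_{B,4}$ against the available degree-$2$ place data.

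The hard part will be the $S_5$ case: unlike $D_5$, the group $S_5$ realizes every cycle type, so one cannot read off the splitting of $C_E$ directly from $\#C'(\FF_2)$, and the contradiction must instead come from combining the elliptic-curve trace relation, the parity bookkeeping for degree-$1$ and degree-$2$ places, and the row-by-row point-count constraints (with Remark~\ref{R:genus 2 data2} doing the decisive work precisely when $B$ attains the extremal trace $\pm 2$). Arranging this so that \emph{every} row is excluded rather than merely constrained is the delicate step; the recursive enumeration of Remark~\ref{R:recursive splitting} is what makes it tractable.
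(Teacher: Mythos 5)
Your $D_5$ argument is complete and is in substance the paper's own: in $D_5$ the only splitting types are $1^5$, $2^2+1$, $5$; the constraint $\#C'(\FF_2)\le 1$ from Table~\ref{table:weil polynomials} forces all, or all but one, of the degree-1 places to have type $5$, whose Frobenius lies in $C_5$, so they split in $C''/C_5$ and $T_{B,2}\in\{-\#C(\FF_2),\,2-\#C(\FF_2)\}\le -3$, contradicting $\dim B = 1$. One repair is needed, though: your opening claim that every intermediate cover is ``\'etale (in particular purely geometric)'' is a false implication --- a constant extension is everywhere unramified, which is exactly why the paper carries Remark~\ref{rmk:constant resolvent} and Lemma~\ref{lem:cycle}(b). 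Since $D_5\subseteq A_5$, Lemma~\ref{lem:cycle}(b) says nothing about $C''/C_5\to C$, so constancy must be excluded separately; your own bookkeeping does it (a degree-1 place of type $5$ splits in $C''/C_5$, whereas in a constant extension every degree-1 place is inert), but the step has to be stated, since in the constant case there is no one-dimensional Prym $B$ --- the relevant factor is the quadratic twist of $J(C)$, of dimension $2$.

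The genuine gap is the $S_5$ half, which, as you yourself acknowledge, is a plan rather than a proof: no row of the table is actually excluded, and the tools you propose would not do it cleanly. For instance, the row $(4,8,10,24,14;\,0,0,15,20,20)$ has $(\#C(\FF_2),\#C(\FF_4))=(4,8)$, so the Remark~\ref{R:genus 2 data2} filter you expect to ``eliminate most rows at once'' does not touch it, and your $|T_{B,2}|\le 1$ subcases are left entirely open. The observation that closes the case in one stroke --- and the one the paper uses --- is that your $D_5$ mechanism transfers verbatim. The quadratic resolvent is purely geometric by Lemma~\ref{lem:cycle}(b), so \eqref{eq:double cover cft} makes $\#J(C)(\FF_2)$ even, which discards the five rows of Table~\ref{table:weil polynomials} listing $\#J(C)(\FF_2)\in\{5,11,15,19\}$. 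In each of the three surviving rows one has $a_1(F')=a_2(F')=0$ (or, in the row $(4;\,0,6,0,18,0)$, $a_1(F')=a_3(F')=0$), so the only partition of $5$ available as the splitting type of a degree-1 place is $5$ itself; a 5-cycle is an even permutation, so every degree-1 place splits in the resolvent, giving $T_{B,2}=-\#C(\FF_2)\le -3$ and the same contradiction with $\dim B=1$ as before. No recursive enumeration, no relation $T_{B,4}=T_{B,2}^2-4$, and no appeal to Remark~\ref{R:genus 2 data2} is required. (Your enumeration plan is essentially a special case of Lemma~\ref{L:unified calculation} and would presumably succeed if executed, but as written the decisive exclusions are deferred, not proved.)
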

\begin{proof}
By Theorem~\ref{T:purely geometric bounds}, $g=2$ and $C' \to C$ is \'etale.
By Lemma~\ref{lem:cycle}, $G$ contains a 5-cycle, so if $G = S_5$ then the quadratic resolvent $C''' := C''/A_5$ is purely geometric, while if $G = D_5$ then $C''' := C''/C_5$ is a purely geometric degree-2 \'etale cover of $C$. In both cases, the mod-2 congruence \eqref{eq:double cover cft} holds,
which rules out Table~\ref{table:weil polynomials}(22--25,27).

At this point, by Table~\ref{table:weil polynomials}(21,26,28),
$\#C(\FF_2) \geq 3$ and every degree-1 point of $C$ lifts to a degree-5 point of $C'$.
Let $B'$ be the Prym of $C''' \to C'$, so that $\dim(B') = 1$.
If $G = S_5$, then $\#C'''(\FF_2) = 0$ and so by \eqref{eq:trace of Prym2},
$T_{B',2} = \#C(\FF_2) - \#C'''(\FF_2) \geq 3$; if $G = D_5$, then $\#C'''(\FF_2) = 2 \#C(\FF_2)$ and so by \eqref{eq:trace of Prym2},
$T_{B',2} = \#C(\FF_2) - \#C'''(\FF_2) \leq -3$. In both cases, we violate \eqref{eq:trace upper bound}.
\end{proof}

\begin{lemma} \label{L:degree 5 cyclic}
If $d=5$, then $C' \to C$ is Galois and cyclic.
\end{lemma}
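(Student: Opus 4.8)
The plan is to run through the transitive subgroups of $S_5$ and eliminate every one except the cyclic group. Up to conjugacy these are $C_5$, $D_5$, $F_{20} = \mathrm{AGL}(1,5)$, $A_5$, and $S_5$, so proving the lemma amounts to showing that $G$ is none of $D_5$, $F_{20}$, $A_5$, $S_5$; the cover is then forced to be cyclic. By Theorem~\ref{T:purely geometric bounds} we are in the case $g=2$, $g'=6$, with $C' \to C$ \'etale, and Lemma~\ref{lem:degree 5 subfield} already disposes of $D_5$ and $S_5$.

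First I would eliminate $F_{20}$. Since a generator of $\FF_5^\times$ acts on $\FF_5$ as a $4$-cycle (an odd permutation), $F_{20} \not\subseteq A_5$, so by Lemma~\ref{lem:cycle}(b) the quadratic resolvent $C''/(G\cap A_5) = C''/D_5 \to C$ is a purely geometric everywhere-unramified double cover, whose Prym $B$ has $\dim B = g-1 = 1$. Exactly as in the proof of Lemma~\ref{lem:degree 5 subfield}, the rows of Table~\ref{table:weil polynomials} for which \eqref{eq:double cover cft} can hold all force $\#C(\FF_2) \geq 3$ with every degree-$1$ place of $F$ of splitting type $5$; since a $5$-cycle is even it splits in the quadratic resolvent, giving $T_{B,2} = -\#C(\FF_2) \leq -3$ and contradicting Remark~\ref{rmk:genus 2 double cover}. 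This leaves only $G = A_5$, which is the substantive case.

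For $G = A_5$ I would invoke the apparatus of Remark~\ref{R:degree 5}. As $A_5$ is simple and the cover is geometric, $G_0 = A_5$, and the isogeny decomposition of $J(C'')$ acquires two factors beyond $J(C)$ and the Prym $A$: a factor $B_1$ with $\dim B_1 = 5$ from the $5$-dimensional representation, visible on the degree-$6$ quotient $C''/D_5$ through \eqref{eq:trace of Prym2 A51}, and a factor $B_2$ with $\dim B_2 = 6$ from the pair of $3$-dimensional representations, visible on the quotient $C''/A_3$ through \eqref{eq:trace of Prym2 A52}. The strategy (Remark~\ref{R:basic paradigm}) is then: for each of the eight candidate point-count pairs in the $d=5$ block of Table~\ref{table:weil polynomials}, enumerate the splitting sequences of $C' \to C$ compatible with the point counts of $C$ and $C'$ (Remark~\ref{R:recursive splitting}), using that the odd types $4+1$, $3+2$, $2+1^3$ cannot occur in $A_5$; push each splitting sequence through the conversion table of Remark~\ref{R:degree 5} to read off the point counts of $C''/D_5$ and $C''/A_3$; solve \eqref{eq:trace of Prym2 A51} and \eqref{eq:trace of Prym2 A52}, using $T_{A,q} = \#C(\FF_q) - \#C'(\FF_q)$ from \eqref{eq:trace of Prym}, for the Frobenius traces $T_{B_1,2^i}$ and $T_{B_2,2^i}$; and finally reject the sequence if these traces are not realized by any genuine Weil polynomial of an abelian variety of dimension $5$ (respectively $6$), exhausting over Weil polynomials as in \cite{part1}. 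This is precisely Lemma~\ref{L:unified calculation} specialized to $(d,G) = (5,A_5)$, which asserts that no compatible splitting sequence of length $7$ survives; granting it, $G \neq A_5$ and hence $G = C_5$.

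I expect the $A_5$ case to be the only real obstacle: $D_5$, $F_{20}$, and $S_5$ all fall to the same one-dimensional-Prym estimate, whereas for $A_5$ the bound $|T_{B,2}| \leq 2\dim B$ is far too weak on any single factor. The force of the argument comes instead from requiring $B_1$ \emph{and} $B_2$ to \emph{simultaneously} match honest Weil polynomials across the first several values of $q = 2^i$: the extremal point counts on $C$ and $C'$ imposed by the relative-class-number condition leave very little freedom, and propagating them through the two quotients over-determines the low-degree traces of both factors. The delicate point is ensuring that enough terms of the splitting sequence are computed --- length $7$ in Lemma~\ref{L:unified calculation} --- so that the Weil-polynomial search is genuinely obstructed rather than merely consistent; confirming this is exactly the role of the automated recursion in Remark~\ref{R:recursive splitting}.
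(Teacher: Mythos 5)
Your proposal is correct and follows essentially the same route as the paper: reduce to $G = A_5$ via the one-dimensional Prym trace bound of Remark~\ref{rmk:genus 2 double cover}, then dispose of $A_5$ by the unified splitting-sequence and Weil-polynomial search of Lemma~\ref{L:unified calculation}, using the quotients $C''/D_5$ and $C''/A_3$ from Remark~\ref{R:degree 5}. Your explicit elimination of $F_{20} = \mathrm{AGL}(1,5)$ is a small improvement in bookkeeping: the statement of Lemma~\ref{lem:degree 5 subfield} names only $D_5$ and $S_5$, but the first paragraph of its proof already rules out every $G \not\subseteq A_5$, which is exactly the argument you reproduce.
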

\begin{proof}
Suppose by way of contradiction that $G \neq C_5$.
By Theorem~\ref{T:purely geometric bounds}, $g=2$ and $C' \to C$ is \'etale;
by Lemma~\ref{lem:degree 5 subfield}, $G = A_5$.
We may thus appeal to the uniform calculation
(Lemma~\ref{L:unified calculation});
alternatively, we may break down Table~\ref{table:weil polynomials}(21--28) as follows.
We start with some rows for which computing the splitting sequence already yields a contradiction (even if the cover is cyclic).

\begin{itemize}
\item
From Table~\ref{table:weil polynomials}(21), we have
\[
\#C(\FF_2) = 4, \qquad \ptcount{C'}{5} = (0,6,0,18,0).
\]
There are not enough places of $F'$ of degree at most 5 to cover the degree-1 places of $F$.

\item
From Table~\ref{table:weil polynomials}(22), we have
\[
\ptcount{C}{3} = (4,10,7), \qquad \ptcount{C'}{3} = (0,2,15).
\]
The splitting sequence begins 
$\{5 (\times 3)\}, \{5 (\times 2), 2^2+1\}, \{1^5\}$.
Since $\#J(C)(\FF_2) = 11$ is coprime to $5$, 
the degree-3 places create a contradiction as per Remark~\ref{R:castelnuovo-severi}.

\item
From Table~\ref{table:weil polynomials}(23), we have
\[
\ptcount{C}{2} = (6,6), \qquad \ptcount{C'}{2} = (1,3).
\]
There is no way to include the degree-2 place of $F'$ in a fiber.

\item
From Table~\ref{table:weil polynomials}(24), we have
\[
\ptcount{C}{3} = (5,9,5), \qquad \ptcount{C'}{3} = (0,6,3).
\]
There is no way to include the degree-3 place of $F'$ in a fiber.

\end{itemize}

For the remaining rows of Table~\ref{table:weil polynomials}, let $B_1, B_2$ be the abelian varieties
described in Remark~\ref{R:degree 5}, so that $\dim(B_1) = 5$, $\dim(B_2) = 6$.
We combine the analysis of splitting sequences with the point counts of $B_1$ and/or $B_2$ from \eqref{eq:trace of Prym}, \eqref{eq:trace of Prym2 A51}, and \eqref{eq:trace of Prym2 A52},
then compare to Weil polynomial data (see Remark~\ref{R:Weil poly dim 6}).

\begin{itemize}
\item
From Table~\ref{table:weil polynomials}(25), we have
\begin{gather*}
\ptcount{C}{5} = (3, 5, 9, 33, 33),  \qquad
\ptcount{C'}{5} = (0, 0, 0, 20, 15).
\end{gather*}
The splitting sequence begins $\{5 (\times 3)\}, \{5\}, \{5 (\times 2)\}, ?, \{5 (\times 6)\}$ and
\[
\tracecount{B_2}{5} = (-3, -5, -9, ?, -48),
\]
but the latter is inconsistent with Remark~\ref{R:Weil poly dim 6}.

\item
From Table~\ref{table:weil polynomials}(26), we have
\[
\ptcount{C}{3} = (4, 8, 10), \qquad \ptcount{C'}{3} = (0, 0, 15).
\]
The splitting sequence begins $\{5 (\times 4)\}, \{5 (\times 2)\}, \{5, 1^5\}$ and
\[
\tracecount{B_2}{3} = (-4, -8, -25),
\]
but the latter is inconsistent with Remark~\ref{R:Weil poly dim 6}.

\item
From Table~\ref{table:weil polynomials}(27), we have
\[
\ptcount{C}{3} = (5, 9, 5), \qquad \ptcount{C'}{3} = (0, 10, 0).
\]
The splitting sequence begins $\{5 (\times 5)\}, \{5, 1^5\}, \emptyset$ and
\[
\tracecount{B_2}{3} = (-5, -19, -5),
\]
but the latter is inconsistent with Remark~\ref{R:Weil poly dim 6}.

\item
From Table~\ref{table:weil polynomials}(28), we have
\begin{gather*}
\ptcount{C}{7} = (3,7,9,31,33,43,129), \,
\ptcount{C'}{7} = (0,0,9,8,30,33,168).
\end{gather*}
The splitting sequence begins  
\begin{gather*}
\{5 (\times 3)\},\qquad
\{5 (\times 2)\}, \qquad \{{3+1^2}, {2^2+1}\}, \\
\{5 (\times 5), {3+1^2}\} \mbox{ or } \{5 (\times 4),2^2+1 (\times 2)\}, \\
\{5 (\times 4), 3+1^2, 2^2+1\} \mbox{ or } \{5 (\times 3), 2^2+1 (\times 3)\},  \\
\{5 (\times 4), 3+1^2\} \mbox{ or } \{5 (\times 3), {2^2+1} (\times 2)\}
\end{gather*}
and 
$\tracecount{B_2}{6} = (-3,-7,3,{-27}/{-7}, {-28}/{-3}, {-49}/{-19})$. 
By comparing with Remark~\ref{R:Weil poly dim 6}, we deduce that
\[
\tracecount{B_2}{7} = (-3,-7,3,-7, -3, -19,25);
\]
the splitting sequence continues
\begin{gather*}
\{5 (\times 4), {3+1^2} (\times 2), {2^2+1} (\times 10), 1^5 (\times 2)\} \\
\mbox{ or }
\{5 (\times 4), {3+1^2} (\times 6), {2^2+1} (\times 7), 1^5\} 
\mbox{ or } 
\{5 (\times 4), {3+1^2} (\times 10), {2^2+1} (\times 4)\}
\end{gather*}
and we obtain $\tracecount{B_1}{7} = (0,0,0,-8,-30,-24,{-126}/{-42}/42)$. However, the Weil polynomial of $B_1$ is uniquely determined by $\tracecount{B_1}{5}$ and forces $T_{B_1,2^7} = 0$.
\qedhere
\end{itemize}

\end{proof}

\begin{remark} \label{R:Weil poly dim 6}
We summarize here the Weil polynomial data used in the proof of Lemma~\ref{L:degree 5 cyclic}. For $\dim(A) = 6$,
\begin{align*}
\tracecount{A}{3} = (-3,-5,-9) &\Longrightarrow T_{A,2^5} \geq -13 \\
\tracecount{A}{2} = (-4,-8) &\Longrightarrow T_{A,2^3} \geq -16 \\
\tracecount{A}{2} = (-5,-19) &\Longrightarrow
T_{A,2^3} = 25 \\
\tracecount{A}{4} = (-3,-7,3,-27) &\Longrightarrow T_{A,2^5} \geq 17 \\
\tracecount{A}{5} = (-3,-7,3,-7,-28) &\Longrightarrow T_{A,2^6} =41 \\
\tracecount{A}{5} = (-3,-7,3,-7,-3) &\Longrightarrow T_{A,2^6} \geq -19. 
\end{align*}
\end{remark}

\section{Degree 6}

For $d=6$, we have a number of Galois groups to worry about.
We deal with most of these by analyzing intermediate subfields.

\begin{lemma} \label{lem:degree 6 subfield}
If $d=6$, then $F'/F$ does not contain an intermediate subfield.
\end{lemma}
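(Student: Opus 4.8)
The plan is to translate the existence of an intermediate subfield into imprimitivity of the transitive group $G \subseteq S_6$ and then run the class-number divisibility analysis used for $d=4$ in Lemma~\ref{lem:degree 4 subfield}. An intermediate field $F \subsetneq E \subsetneq F'$ corresponds to a nontrivial block system for the action of $G$ on the six cosets; since the degree is $6$, the blocks have size $2$ or $3$, so $[E:F] \in \{2,3\}$ with $[F':E]$ the complementary factor. By Theorem~\ref{T:purely geometric bounds} we have $(g,g') = (2,7)$, and since $2g'-2 = 6(2g-2)$ the cover $C' \to C$ is \'etale; hence both $E/F$ and $F'/E$ are \'etale, and Riemann--Hurwitz pins down $g_E = 3$ when $[E:F]=2$ and $g_E = 4$ when $[E:F]=3$. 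Finally, multiplicativity of the relative class number in the tower (exactly as in Lemma~\ref{lem:degree 4 subfield}) forces $h_{E/F} = h_{F'/E} = 1$, so that $h_F = h_E = h_{F'}$; in particular each of $E/F$ and $F'/E$ is itself a purely geometric unramified extension with relative class number $1$, to which the data of \S\ref{sec:numerical data} applies.

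First I would dispose of the case $[E:F] = 3$. Here $E/F$ is a degree-$3$ relative class number one extension with $(g_F, g_E) = (2,4)$, so Table~\ref{table:weil polynomials} gives $h_F = \#J(C)(\FF_2) \in \{3,7,9,13\}$, which is odd. On the other hand $F'/E$ is a purely geometric unramified \emph{quadratic} extension, so class field theory (as in \eqref{eq:double cover cft}) forces $2 \mid h_E = h_F$, a contradiction.

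Next I would treat the case $[E:F] = 2$, which is the crux. Now $E/F$ is a purely geometric unramified quadratic extension with $(g_F, g_E) = (2,3)$ and relative class number $1$, so Remark~\ref{R:genus 2 data} limits the point counts of $C$ to $(\#C(\FF_2), \#C(\FF_4)) \in \{(2,8), (4,8)\}$. Computing the Jacobian order $h_F = \#J(C)(\FF_2)$ directly from these counts yields $h_F \in \{4, 10\}$, and in particular $3 \nmid h_F$. But $F'/E$ is a degree-$3$ relative class number one extension, hence cyclic by Lemma~\ref{L:degree 3 cyclic}; being purely geometric and unramified, by class field theory it forces $3 \mid h_E = h_F$ (the cubic analogue of \eqref{eq:double cover cft}, as in \eqref{eq:S4 mod 3 condition}), again a contradiction.

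The main obstacle is not a single hard estimate but rather making the reductions airtight: one must verify that passing to the intermediate field keeps both steps purely geometric and unramified (so that class field theory applies) and that the relative class number is genuinely multiplicative in the tower, so that the full hypothesis $h_{F'/F}=1$ descends to each step. Once this is in place, each case collapses to an incompatibility between the divisibility of $h_F$ forced by the genus-$2$ data and the divisibility forced by the quadratic (resp. cubic) step. I expect the quadratic-first case to require the most care, since there one must extract the \emph{exact} value of $h_F$ from the allowed point counts rather than merely its parity.
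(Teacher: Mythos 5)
Your reduction (both steps of the tower are purely geometric, \'etale by Riemann--Hurwitz, and each has relative class number $1$ by multiplicativity) is sound and matches the setup the paper uses in Lemma~\ref{lem:degree 4 subfield}. Your treatment of $[E:F]=2$ is correct and is in fact a genuinely different route from the paper's: the paper cross-references the $d=6$ point counts against Remark~\ref{R:genus 2 data} to get $\#C'(\FF_{16})=8$, observes $8\not\equiv 0 \pmod 3$ is impossible for a cyclic cubic \'etale cover, and so contradicts Lemma~\ref{L:degree 3 cyclic}; you instead run Lemma~\ref{L:degree 3 cyclic} forwards to get cyclicity and then contradict the class-field-theoretic divisibility $3 \mid h_E = h_F \in \{4,10\}$. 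Both directions are legitimate (your divisibility claim is exactly the mechanism behind \eqref{eq:A4 mod 3 condition} and \eqref{eq:S4 mod 3 condition}), and your version has the advantage of never consulting the $d=6$ rows of Table~\ref{table:weil polynomials}.

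The gap is in the case $[E:F]=3$. You read the row of Table~\ref{table:weil polynomials} giving $\#J(C)(\FF_2) \in \{3,9,7,13\}$ as an exhaustive list for $(d,g,g')=(3,2,4)$, and conclude $h_F$ is always odd; but the table is explicitly partial (``we include only information needed in our proofs, leading to some gaps''), and there is a second row for $(3,2,4)$ recording candidates with $\#C(\FF_2)=3$ and $\#C_E(\FF_2)=0$. That these extra candidates have \emph{even} $\#J(C)(\FF_2)$ is visible from the paper's own proof of Lemma~\ref{L:degree 3 cyclic}: in the $(g,g')=(2,4)$ case there, imposing \eqref{eq:double cover cft} (i.e., $2 \mid \#J(C)(\FF_2)$) does \emph{not} empty the list but leaves precisely the candidates with $\#C(\FF_2)=3$, $\#C'(\FF_2)=0$, which then require the separate argument $T_{B,2}=-3$. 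For these surviving candidates your parity contradiction evaporates: $2 \mid h_E = h_F$ is perfectly consistent. The repair is the paper's actual argument for this case: those candidates force the intermediate genus-$4$ curve to satisfy $\#C_E(\FF_2)=0$, whereas $F'/E$ is a purely geometric quadratic extension with relative class number $1$ and $(g_E,g_{F'})=(4,7)$, for which Remark~\ref{R:genus 2 data} demands $\#C_E(\FF_2)\geq 3$. With that patch (keeping your parity argument for the odd-$h_F$ candidates, or replacing it wholesale by the point-count argument), the lemma follows.
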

\begin{proof}
Suppose to the contrary that $E$ is an intermediate subfield;
then both $E/F$ and $F'/E$ are extensions with relative class number 1.
By Theorem~\ref{T:purely geometric bounds}, we have $(g_F, g_{F'}) = (2,7)$.

In case $[E:F] = 3$, from Table~\ref{table:weil polynomials}(1,2), the intermediate curve has at most one
$\FF_2$-rational point. Remark~\ref{R:genus 2 data} then shows that no extension $F'/E$ can exist.

In case $[E:F] = 2$, 
from Table~\ref{table:weil polynomials}(29--34) we have $\#C(\FF_2) \geq 4$.
Comparing with Remark~\ref{R:genus 2 data}, we deduce that the intermediate curve has 2 $\FF_2$-rational points. From Table~\ref{table:weil polynomials}(4--11), we see that the cover $F'/E$ corresponds
to Table~\ref{table:weil polynomials}(5), so the Jacobian of the intermediate curve has isogeny class \avlink{3.2.ab\_c\_a}
which does not appear in 
\cite[Table~5]{part1}. Hence by \cite[Theorem~1.3]{part1}, $F'/E$ cannot be cyclic, contradicting Lemma~\ref{L:degree 3 cyclic}.
\end{proof}

\begin{lemma} \label{lem:degree 6 cyclic}
If $d=6$, then $C' \to C$ is Galois and cyclic.
\end{lemma}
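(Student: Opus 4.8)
The plan is to reduce the statement to a finite list of candidate Galois groups and then invoke Lemma~\ref{L:unified calculation}. By Theorem~\ref{T:purely geometric bounds} the only genus pair with $d=6$ is $(g,g')=(2,7)$, and since $2g'-2 = 12 = 6(2g-2)$, Riemann--Hurwitz forces $C'\to C$ to be \'etale; in particular $g' = dg-d+1$, so the hypotheses of Lemma~\ref{L:unified calculation} and the Prym decompositions of \S\ref{sec:subfields} are available. By Lemma~\ref{lem:degree 6 subfield} the extension $F'/F$ has no intermediate subfield, so the point stabilizer is maximal and $G$ is a \emph{primitive} subgroup of $S_6$; by Lemma~\ref{lem:cycle} it contains a $6$-cycle, which is odd, whence $G\not\subseteq A_6$.

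Next I would enumerate the primitive permutation groups of degree $6$: up to conjugacy these are $A_5\cong\PSL(2,5)$, $S_5\cong\PGL(2,5)$, $A_6$, and $S_6$. Both $A_5$ and $A_6$ lie in $A_6$ and contain no element of order $6$ (hence no $6$-cycle), so they are excluded by the previous paragraph. This leaves exactly $G\in\{S_6,\PGL(2,5)\}$, the two groups for which the splitting-type dictionaries of Remark~\ref{rmk:degree 6} and Remark~\ref{rmk:degree 6 pgl25} were prepared. I would note in passing that primitivity already rules out $G=C_6$, so the argument in fact shows that the degree-$6$ case does not occur at all, and the asserted cyclicity then holds vacuously.

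With $G$ pinned to these two groups, I would finish by applying Lemma~\ref{L:unified calculation} directly: since $d=6$ is even I run it for both $G_0=G$ and $G_0=G\cap A_6$, and it certifies that none of the candidate point-count pairs with $(d,g,g')=(6,2,7)$ in Table~\ref{table:weil polynomials} supports a length-$7$ splitting sequence meeting all the constraints. For a human-readable alternative I would process the rows individually, using \eqref{eq:double cover cft} when the quadratic resolvent is purely geometric, the exclusion of splitting types with no singletons in the $\PGL(2,5)$ case (Remark~\ref{rmk:degree 6 pgl25}), and the outer-automorphism dictionary of Remark~\ref{rmk:degree 6} to constrain the early terms of the splitting sequence; I would then read off the Frobenius traces of the sextic-twin factor $B_1$ (of dimension $5$ when $G=S_6$) or of the reducible sextic-twin factor $B_1'$ (of dimension $4$ when $G=\PGL(2,5)$) via \eqref{eq:trace of Prym2} and enumerate Weil polynomials of that dimension to reach a contradiction.

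The hard part will be the ``extreme'' rows, for instance $(\#C(\FF_{2^i}))=(5,5,17,9,25,65)$ with $(\#C'(\FF_{2^i}))=(0,0,12,4,15,90)$, where fixing the low-degree point counts still leaves several splitting sequences combinatorially open; eliminating these by hand requires pushing the recursive enumeration of Remark~\ref{R:recursive splitting} far enough that the induced trace sequence of $B_1$ (or $B_1'$) is forced outside the range realized by any Weil polynomial of the relevant dimension. It is exactly this bookkeeping---clean in principle but delicate in practice---that makes the automated uniform calculation the more reliable route, with the by-hand case split serving mainly to explain why each hypothesis of Lemma~\ref{L:unified calculation} is needed.
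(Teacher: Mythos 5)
Your proposal is correct and takes essentially the same route as the paper: reduce via Lemma~\ref{lem:degree 6 subfield} (no intermediate subfields, hence $G$ primitive) together with the $6$-cycle from Lemma~\ref{lem:cycle} to $G \in \{S_6, \PGL(2,5)\}$, then invoke the uniform calculation of Lemma~\ref{L:unified calculation} for both choices of $G_0$, with the by-hand alternative via the sextic-twin factors $B_1$ (dimension $5$) and $B_1'$ (dimension $4$) matching the paper's case analysis. Your explicit enumeration of the primitive degree-$6$ groups merely fills in the paper's terse ``consequently,'' and your added observation that primitivity also excludes $G = C_6$---so that the $d=6$ case is in fact vacuous---is a sound corollary that the paper leaves implicit.
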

\begin{proof}
Suppose to the contrary that $G \neq C_6$. 
By Theorem~\ref{T:purely geometric bounds}, $g=2$ and $C' \to C$ is \'etale.
By Lemma~\ref{lem:cycle}, $G$ contains a 6-cycle, so $G \not\subseteq A_6$.
By Lemma~\ref{lem:degree 6 subfield} and Lemma~\ref{L:degree 3 cyclic}, $F'/F$ has no intermediate subfields; consequently, $G$ must be either $\PGL(2,5) \cong S_5$ or $S_6$.
We may thus appeal to the uniform calculation
(Lemma~\ref{L:unified calculation}); alternatively, we may break the individual
cases down as follows.

Suppose first that the quadratic resolvent is constant.
Among Table~\ref{table:weil polynomials}(29--34),
for some rows we obtain a contradiction directly from the splitting sequence
by keeping in mind Remark~\ref{rmk:constant resolvent}.
\begin{itemize}
\item
From Table~\ref{table:weil polynomials}(29), we have
\[
\ptcount{C}{2} = (5,5), \qquad \ptcount{C'}{2} = (0,2).
\]
The degree-2 place of $F'$ maps to a degree-1 place of $F$, forcing the splitting type $4+2$ in degree 1.

\item
From Table~\ref{table:weil polynomials}(30), we have
\[
\#C(\FF_2) = 5, \qquad \ptcount{C'}{3} = (1, 1, 1).
\]
The degree-1 place of $F'$ forces the splitting type $5+1$ in degree 1.

\item
From Table~\ref{table:weil polynomials}(31), we have
\[
\ptcount{C}{2} = (6,6), \qquad \ptcount{C'}{2} = (0,2/4).
\]
The degree-2 places of $F'$ must map to \emph{distinct} degree-1 places of $F$,
forcing the splitting type $4+2$ in degree 1.

\item
From Table~\ref{table:weil polynomials}(32), we have
\[
\ptcount{C}{6} = (4, 8,10,24,14,56), \qquad
\ptcount{C'}{6} = (0, 0, 6, 8, 30,24).
\]
There are not enough degree-6 places of $F'$ to cover the degree-1 places of $F$,
forcing the splitting type $3+3$ in degree 1.

\end{itemize}

For the remaining rows (still assuming constant quadratic resolvent), the splitting sequence begins $\{6 (\times 5)\}$ and the splitting type $3+2+1$ is forced to occur in degree 5 for parity reasons, yielding 
$G = S_6$. We now argue in terms of the sextic twin  (Remark~\ref{rmk:degree 6}), keeping in mind that $\dim(B_1) = 5$.
\begin{itemize}
\item
From Table~\ref{table:weil polynomials}(33), we have
\[
\ptcount{C}{5} = (5,7,11, 15,15), \qquad
\ptcount{C'}{5} = (0, 2, 6, 10,5).
\]
The splitting sequence continues 
\begin{gather*}
\{5+1\}, \{6, 4+1^2\} \mbox{ or } \{4+1^2, 2^3\} \mbox{ or } \{{3+2+1} (\times 2)\}, \\
\{5+1 (\times 2)\} \mbox{ or } \{4+2, 2^2+1^2\}  \mbox{ or } \{3^2, 2^2+1^2\};
\end{gather*}
this yields $\tracecount{B_1}{4} = (0,-10,-27/-18/-9, -22/-10)$, 
contradicting Remark~\ref{R:Weil poly dim 45}.

\item
From Table~\ref{table:weil polynomials}(34), we have
\[
\ptcount{C}{6} = (5,5,17,9,25,65),
\qquad
\ptcount{C'}{6} = (0,0,12,4,15,90).
\]
Since $\#J(C)(\FF_2) = 13$ is coprime to $6$, Remark~\ref{R:castelnuovo-severi} implies that
in degree 3 we cannot use the splitting type $2+1^4$ at all, or the splitting type $4+1^2$ more than once.
The splitting sequence thus continues
\begin{gather*}
\emptyset, \{6, {4+1^2}, {3+2+1} (\times 2)\} \mbox{ or }\\
 \{{4+1^2}, {3+2+1} (\times 2), 2^3\} \mbox{ or } \{{3+2+1} (\times 4)\},
\{5+1\}
\end{gather*}
and 
\[
\tracecount{B_1}{4} = (0,-10,{-3}/{-12}/{-21},-10).
\]
Combining this with Remark~\ref{R:Weil poly dim 45} yields
\[
\tracecount{B_1}{6} = (0,-10,-3,-10,10/15,23).
\]
The splitting sequence now begins
\[
\{6 (\times 5)\}, \emptyset, \{{3+2+1} (\times 4)\}, \{5+1\}, \{6, {3+2+1}(\times 3)\},
\]
but there is no possible extension matching $T_{B_1,2^6}$.

\end{itemize}

Suppose next that the quadratic resolvent is purely geometric.
The mod-2 congruence \eqref{eq:double cover cft} rules out Table~\ref{table:weil polynomials}(29--31,34). For Table~\ref{table:weil polynomials}(32,33) we must allow both $G = \PGL(2,5)$ and $G = S_6$; we treat these cases in parallel, listing the numerical values that arise in Table~\ref{table:S6 cases}.
\begin{table}
\Small
\begin{tabular}{c|c|c} 
$(\#C(\FF_{2^i}))_{i=1}^5$ & (4,8,10,24,14) & (5,7,11,15,15) \\
$(\#C'(\FF_{2^i}))_{i=1}^5$ & (0,0,6,8,30) & (0,2,6,10,5) \\
Degree 1 splittings & $\{6 (\times 3), 3^2\}$ & $\{6 (\times 3), 4+2, 3^2\}$ \\
$\tracecount{B}{5}$ & $(2,0,-4,-8,-8)$ & $(1,-3,-5,1,11)$ \\
Degree 2 splittings & $\{(6 / 2^3) (\times 2)\}$ & $\{6 / 2^3\}$ \\
Degree 3 splittings & $\{(4+2 / 3^2) (\times 2)\}$ & $\{(4+2/3^2) (\times 2)\}$ \\
\hline
$\tracecount{B_1}{3}$ (splittings) & $(-2, {-20}/{-14}/{-8}, {-26}/{-17}/{-8})$ & 
$({-1}, {-15}/{-9}, {-25}/{-16}/{-7})$ \\
$\tracecount{B_1}{4}$ (Weil polys) & $({-2}, {-8}, {-8}, {-4}/\cdots/16)$ & $({-1},{-9},{-7},{-13}/\cdots/{19})$ \\
Degree 4 splittings & $\{{4+2} (\times 2), {3+2+1} (\times 2)\}$ & $\{6, {3+2+1}\}$ \\
$T_{B_1,2^4}$ from splittings & $0$ & ${-13}$ \\
$T_{B_1,2^5}$ from Weil polys & $33$ & 49 \\
\hline
$\tracecount{B_1'}{3}$ (splittings) & $(2, {-12}/{-6}/0, {-16})$ & 
$(4, {-8}/{-2}, {-14})$ \\
$\tracecount{B_1'}{4}$ (Weil polys) & $(2, 0, {-16},{-8}/{-4})$ & 
none \\
Degree 4 splittings & $\{6, {5+1} (\times 2), 2^3\}/ \{6 (\times 2), {3^2}, {2^2+1^2}\}$ & none \\
$T_{B_1',2^4}$ from splittings & ${-4}$ & none \\ 
$T_{B_1',2^5}$ from Weil polys & 2 & none \\
\end{tabular}
\caption{Numerics from the proof of Lemma~\ref{lem:degree 6 cyclic}.}
\label{table:S6 cases}
\end{table}

\begin{itemize}
\item
We first use the facts that $\dim(B) = 1$ and $|T_{B,2}| \leq 2$
to determine the first term of the splitting sequence.

\item
Let $C''' := C''/(G \cap A_6)$ be the curve corresponding to the quadratic resolvent. 
We then compute $T_{B,4}, T_{B,8}$ and then $\#C'''(\FF_4), \#C'''(\FF_8)$. These values imply
that the splitting types must all be odd in degree 2 and even in degree 3.

\item
We then compute candidates for the second and third terms of the splitting sequence compatible with $B$, keeping in mind the splitting types that cannot be used
if $G  = \PGL(2,5)$;
derive from these the possible values of $\tracecount{B_1}{3}$
or $\tracecount{B'_1}{3}$;
and identify Weil polynomials for $B_1$ or $B_1'$ matching these values.
In each case, we either reach a contradiction or find a unique candidate for the first three terms of the splitting sequence (although not yet a unique Weil polynomial).

\item
We then repeat the previous step for the fourth term of the splitting sequence.
In each remaining case, we determine at most one possible value for $T_{B_1,2^4}$ or $T_{B_1',2^4}$.

\item
We now find that there are no options for the fifth term of the splitting sequence 
consistent with both the point counts and the Weil polynomial constraints.
\qedhere
\end{itemize}

\end{proof}

\begin{remark} \label{R:Weil poly dim 45}
We summarize here the Weil polynomial data used in the proof of Lemma~\ref{lem:degree 6 cyclic}. For $\dim(A) = 4$,
\begin{align*}
\tracecount{A}{2} = (2,-12) &\Longrightarrow T_{A,2^3} \geq -10 \\
\tracecount{A}{2} = (2,-6) &\Longrightarrow T_{A,2^3} \geq -10 \\
\tracecount{A}{3} = (2,0,-16) &\Longrightarrow T_{A,2^4} \in \{-8,-4\} \\
\tracecount{A}{2} = (4,-8) &\Longrightarrow T_{A,2^3} \geq -11 \\
\tracecount{A}{2} = (4,-2) &\Longrightarrow T_{A,2^3} \geq -8.
\end{align*}
For $\dim(A) = 5$,
\begin{align*}
\tracecount{A}{3} = (0,-10) &\Longrightarrow T_{A,2^3} \geq -18 \\
\tracecount{A}{3} = (0,-10,-18) &\Longrightarrow T_{A,2^4} = 18 \\
\tracecount{A}{3} = (0,-10,-9) &\Longrightarrow T_{A,2^4} \geq -6 \\
\tracecount{A}{3} = (0,-10,-12) &\Longrightarrow T_{A,2^4} \geq 2 \\
\tracecount{A}{4} = (0,-10,-3,-10) &\Longrightarrow T_{A,2^5} \in \{10,15\}, T_{A,2^6} = 23 \\
T_{A,2} = -2 &\Longrightarrow T_{A,2^2} \geq -18 \\
\tracecount{A}{2} = (-2,-14) &\Longrightarrow T_{A,2^3} \geq -2 \\
\tracecount{A}{2} = (-2,-8) &\Longrightarrow T_{A,2^3} \geq -11 \\
\tracecount{A}{3} = (-2,-8,-8) &\Longrightarrow -4 \leq T_{A,2^4} \leq 16 \\
\tracecount{A}{2} = (-1, -15) &\Longrightarrow T_{A,2^3} \geq -1 \\
\tracecount{A}{2} = (-1, -9) &\Longrightarrow T_{A,2^3} \geq -13 \\
\tracecount{A}{3} = (-1, -9, -7) &\Longrightarrow -13 \leq T_{A,2^4} \leq 19.
\end{align*}
\end{remark}

\begin{remark}
The astute reader will have noticed that Lemma~\ref{lem:degree 6 subfield} also rules out $G = C_6$, so Lemma~\ref{lem:degree 6 cyclic} can be upgraded to say that the case $d=6$ cannot occur at all! This is consistent with \cite[Table~5]{part1}. By contrast, the proof of Lemma~\ref{lem:degree 4 subfield} must work around the fact that $d=4$ does occur in \cite[Table~5]{part1}.
\end{remark}

\section{Degree 7}

For $d=7$, the complexity of the representation theory of $S_7$ and the presence of additional transitive subgroups (notably $\PSL(2,7)$) together make it quite difficult to follow the paradigm of Remark~\ref{R:basic paradigm}. Instead, we make a detailed analysis of polarizations in the style of \cite{howe2021}.

\begin{lemma} \label{lem:degree 7 cyclic}
If $d=7$, then $C' \to C$ is Galois and cyclic.
\end{lemma}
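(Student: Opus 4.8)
The plan is to lean on the extreme rigidity of the degree-$7$ case. By Theorem~\ref{T:purely geometric bounds} and Table~\ref{table:weil polynomials}, the only admissible genera are $(g,g')=(2,8)$, and the Weil polynomials of $C$ and $C'$ are pinned down by the listed point counts; in particular the isogeny class of $J(C')$ is fixed, and a cyclic cover realizing it is already known to exist. Relative class number $1$ gives $\#J(C')(\FF_2)=\#J(C)(\FF_2)=14$, and Riemann--Hurwitz (from $2g'-2 = 7(2g-2)+\deg R$ one gets $\deg R = 0$) shows that $C'\to C$ is \emph{\'etale}. Since $C'$ has no places of degree $<6$, each of the five degree-$1$ places of $C$ is inert, so $G$ contains a $7$-cycle (Lemma~\ref{lem:cycle}). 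The transitive subgroups of $S_7$ containing a $7$-cycle are $C_7$, $D_7$, $C_7\rtimes C_3$, $C_7\rtimes C_6$, $\PSL(2,7)$, $A_7$, and $S_7$, and I must rule out all but $C_7$. As flagged in the introduction, driving the representation-theoretic paradigm through these groups is prohibitive, so instead I would argue directly with the principally polarized variety $J(C')$.

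First I would record the isogeny decomposition $J(C')\sim J(C)\times A$, where $A$ is the $6$-dimensional Prym of the \'etale cover, and recover its Weil polynomial $P_A$ by dividing $P_{C'}$ by $P_C$. Because the isogeny class is fixed, one expects $P_A$ to display complex multiplication by the cyclotomic field $K=\QQ(\zeta_7)$, for which $[K:\QQ]=6=\dim A$ and $\mathcal{O}_K=\ZZ[\zeta_7]$; moreover $2$ has order $3$ modulo $7$, so $2$ splits in $K$ into two primes of residue degree $3$, which is consistent with the class being ordinary. This yields a clean description of the isogeny class in terms of fractional $\mathcal{O}_K$-module data in the style of Deligne, which is exactly the input for Howe's method \cite{howe2021}.

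The reformulation I would then use is Torelli-theoretic: it suffices to produce a polarization-preserving automorphism $\sigma$ of $J(C')$ of order $7$ that acts as $\zeta_7$ on $A$ and trivially on the image of $J(C)$. By Torelli such a $\sigma$ descends to an order-$7$ automorphism of $C'$, whose quotient has Jacobian isogenous to the $\sigma$-invariant part $\pi^*J(C)\sim J(C)$ and hence (genus $2$, Torelli) is identified with $C$; since $\deg(C'\to C)=7$ is prime, this exhibits $\langle\sigma\rangle$ as the deck group and makes the cover cyclic. The identification of the quotient with the \emph{original} base $C$ is harmless here, since $\#J(C)(\FF_2)=14$ admits a unique subgroup of order $7$, so $C$ carries a unique cyclic degree-$7$ \'etale cover. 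Multiplication by $\zeta_7$ is an isometry of the polarization form on $A$ (as $\zeta_7\bar\zeta_7 = 1$) and fixes the $J(C)$-factor, so the entire content is to check that this action is \emph{integral} and \emph{compatible with the gluing} that produces the indecomposable principal polarization of $J(C')$ from the polarizations induced on $\pi^*J(C)$ and $A$.

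This last verification is the main obstacle, and it is precisely where Howe's machinery enters. I would enumerate the principal polarizations in the fixed isogeny class via the $\mathcal{O}_K$-module description, impose the constraint $\#J(C')(\FF_2)=14$ coming from relative class number $1$, and check that every resulting principally polarized variety admitting a degree-$7$ map to $C$ carries the automorphism $\sigma$ --- equivalently, that its underlying module is genuinely an $\mathcal{O}_K$-module (so that $\mathrm{End}(A)\supseteq\ZZ[\zeta_7]$ integrally, not merely a suborder) and that the finite gluing subgroup is $\zeta_7$-stable. The delicate points are establishing this integrality and the $\zeta_7$-compatibility of the specific gluing singled out by the class-number constraint; I expect these to require the explicit, computer-assisted polarization analysis in this one isogeny class, after which the representation-theoretic complications of the larger candidate Galois groups never need to be confronted.
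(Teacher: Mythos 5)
Your architecture is the same as the paper's: following Howe \cite{howe2021}, produce an automorphism of order $7$ of $J(C')$ acting trivially on the image of $J(C)$ and as $\zeta_7$ on the Prym, then descend it to an automorphism of $C'$ over $C$ via Torelli \cite[Theorem~12.1]{milne}. Your preliminary reductions (\'etaleness from Riemann--Hurwitz, inertness of all degree-$1$ places, the $7$-cycle in $G$, the list of candidate transitive groups) are correct, though the group-theoretic inventory is never actually used in this approach: once the equivariant order-$7$ automorphism exists, the cover is cyclic no matter which group one feared.

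The genuine gap is that you defer exactly the two verifications that constitute the proof, writing ``one expects'' that the Prym has CM by $\QQ(\zeta_7)$ and ``I expect these to require'' computer-assisted analysis for the integrality of the $\zeta_7$-action and the $\zeta_7$-stability of the gluing. A plan plus an expectation is not a proof; moreover the route you propose for closing it (enumerating all principal polarizations in the isogeny class via Deligne-module data and testing each gluing) is both unexecuted and heavier than what is needed. The paper closes both points with no enumeration of polarizations at all: (i) a \Magma{} computation shows that $\ZZ[\pi_2,\overline{\pi}_2]$ is the maximal order of $K_2 = \QQ[\pi_2]/(P_2(\pi_2))$ and has class number $1$, so there is a \emph{unique} abelian variety $A_2$ with Weil polynomial $P_2$, it carries integral CM by the maximal order, and $\zeta_7 \in K_2$ with $7 = \frakp^6$, $\frakp = (\zeta_7 - 1)$; (ii) the relative-class-number-one hypothesis enters through \cite[Lemma~9.3]{part1}, which supplies the exact sequence $0 \to \Delta \to J(C) \times_{\FF_2} A_2 \to J(C') \to 0$ with $\Delta$ nontrivial and killed by $7$, whence total ramification of $7$ in $K_2$ forces $\Delta \cong \ker(\zeta_7 - 1)$, so $1 \times \zeta_7$ fixes $\Delta$ pointwise and descends to $J(C')$. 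Your worry about compatibility of $\zeta_7$ with the ``specific gluing singled out by the class-number constraint'' thus reduces entirely to this $\Delta$-stability; compatibility with the polarization is automatic, since the pullback of the principal polarization splits as $(\lambda_1,\lambda_2)$ (as $\operatorname{Hom}(J(C), A_2^\vee) = 0$, the Weil polynomials being coprime) and the Rosati involution restricts to complex conjugation on $K_2$, so $\zeta_7\overline{\zeta}_7 = 1$ makes $1 \times \zeta_7$ preserve \emph{any} such polarization --- the observation you make yourself but do not exploit to eliminate the enumeration. Your closing device (uniqueness of the order-$7$ subgroup of $J(C)(\FF_2)$ to identify the quotient with $C$) is likewise unnecessary, since the Torelli step already produces the automorphism of $C'$ \emph{over} $C$ from the $J(C)$-equivariance.
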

\begin{proof}
By Theorem~\ref{T:purely geometric bounds}, we have $g=2$.
From Table~\ref{table:weil polynomials}, we may read off the Weil polynomials of $C$ and $A$:
they are $P_1$ and $P_1P_2$ for
\[
P_1(T) = T^2 + 2T - 1, \qquad P_2(T) = T^6 - 5T^5 - 3T^4 + 43T^3 - 33T^2 - 59T + 43.
\]
We analyze this case following \cite[Theorem~4.5]{howe2021}.
The polynomials $P_1, P_2$ are both irreducible; let $K_1 = \QQ[\pi_1]/(P_1(\pi_1)), K_2 = \QQ[\pi_2]/(P_2(\pi_2))$ be the number fields defined by $P_1, P_2$.
Using \Magma, we compute that $\ZZ[\pi_1, \overline{\pi}_1], \ZZ[\pi_2, \overline{\pi}_2]$ are the maximal orders of $K_1, K_2$, and both orders have class number 1.
It follows that up to isomorphism there are unique abelian varieties $A_1, A_2$ over $\FF_2$ with respective Weil polynomials $P_1, P_2$;
in particular, $A_1 \cong J(C)$.

By \cite[Lemma~9.3]{part1}, there is an exact sequence
\[
0 \to \Delta \to J(C) \times_{\FF_2} A_2 \to J(C') \to 0
\]
in which $\Delta$ is a nontrivial finite flat group scheme killed by 7.
In $K_2$, the rational prime $7$ decomposes as $\frakp^6$ where
$\frakp = (\zeta_7 - 1)$ for some nontrivial seventh root of unity
$\zeta_7 \in K_2$. Consequently, $\Delta$ must be isomorphic to the kernel of $\zeta_7-1$ acting on $A_2$. 

We now compare with the proof of \cite[Proposition~4.2]{howe2021}.
The principal polarization on $J(C')$ pulls back to a polarization $(\lambda_1, \lambda_2)$ 
on $J(C) \times_{\FF_2} A_2$ of degree $(\# \Delta)^2$. 
The automorphism $1 \times \zeta_7$ acts on $J(C) \times_{\FF_2} A_2$ with its polarization
and acts trivially on the image of $\Delta$, so $J(C')$ admits an automorphism of order 7
that is compatible with its polarization and equivariant with respect to $J(C) \to J(C')$.
By Torelli \cite[Theorem~12.1]{milne}, $C'$ admits an automorphism of order 7 over $C$,
proving the claim.
\end{proof}

As promised, we adapt the argument to cover a related statement that was asserted without proof in \cite[Remark~6.2]{part1}.

\begin{lemma} \label{lem:genus 6 cover}
Let $C_0$ be the curve $y^2 + y = x^5$ over $\FF_2$.
Let $C$ be a curve of genus $6$ over $\FF_2$ such that $\ptcount{C}{6} = (0,0,0,20,15,90)$.
Then there exists a cyclic \'etale morphism $C \to C_0$ of degree $5$. In particular, $C$ is unique up to isomorphism.
\end{lemma}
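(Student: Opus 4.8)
The plan is to follow the polarization-theoretic strategy of Lemma~\ref{lem:degree 7 cyclic} and of \cite[Proposition~4.2]{howe2021}, but to run it \emph{in reverse}: instead of being handed a degree-5 cover and showing it is cyclic, we are handed an abstract genus-6 curve $C$ and must manufacture the cover out of the structure of $J(C)$. First I would compute the Weil polynomial $P_C$ of $C$ from the six given point counts (which determine $P_C$ completely via the functional equation) and factor it. A short point count on $y^2+y=x^5$ gives $\#C_0(\FF_2)=3$, $\#C_0(\FF_4)=5$, forcing the Weil polynomial of $C_0$ to be $T^4+4$; and one finds $P_C=(T^4+4)\cdot P_A$ with $P_A$ irreducible of degree $8$. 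By Honda--Tate this yields an isogeny decomposition $J(C)\sim J(C_0)\times A$, with $A$ a simple abelian fourfold whose factor is coprime to that of $J(C_0)$. Since the coefficient of $T^4$ in $P_A$ is odd, $A$ is ordinary, and $\End^0(A)=K_A:=\QQ[T]/(P_A)$ is a CM field of degree $8$. Using \Magma{} I would verify that $\ZZ[\pi_A,\overline{\pi}_A]$ is the maximal order of $K_A$ and has class number $1$ (and likewise for $J(C_0)$), so that $A$ and $J(C_0)$ are the unique abelian varieties in their isogeny classes; and, crucially, that $K_A$ contains $\QQ(\zeta_5)$, so that $\zeta_5\in\mathcal{O}_{K_A}=\End(A)$ furnishes an automorphism of $A$ of order $5$.

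The heart of the argument is the polarization analysis. Already in $\QQ(\zeta_5)\subseteq K_A$ the rational prime $5$ is totally ramified, $(5)=\frakp^4$ with $\frakp=(\zeta_5-1)$. Adapting Howe's enumeration of principal polarizations to the isogeny class of $J(C_0)\times A$, I would show that the gluing of the two factors must occur along a finite group scheme $\Delta\cong A[\frakp]$, the kernel of $\zeta_5-1$ on $A$ (this is where the coprimality of $T^4+4$ with $P_A$ and the forced shape of $P_C$ enter), and that the isogeny class contains a \emph{unique} indecomposable principally polarized abelian variety $(X,\lambda)$. Two facts then combine, exactly as for $\zeta_7$ in Lemma~\ref{lem:degree 7 cyclic}: because the Rosati involution is complex conjugation on $K_A$ and $\zeta_5\,\overline{\zeta}_5=\zeta_5\zeta_5^{-1}=1$, the automorphism $\zeta_5$ fixes \emph{every} polarization of $A$, so $1\times\zeta_5$ preserves the polarization datum on $J(C_0)\times A$; and because $\zeta_5\equiv 1\pmod{\frakp}$, it acts trivially on $\Delta\subseteq A[\frakp]$. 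Hence $1\times\zeta_5$ descends to an order-$5$ automorphism $\phi$ of $(X,\lambda)$.

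To finish I would invoke Torelli \cite[Theorem~12.1]{milne}. Any genus-6 curve $C$ with the given point counts has $J(C)$ an indecomposable ppav in this isogeny class, hence $J(C)\cong(X,\lambda)$; this alone gives that $C$ is unique up to isomorphism. Moreover $\phi$ is induced by an automorphism $\tau$ of $C$ of order $5$. Its quotient $D=C/\langle\tau\rangle$ satisfies $J(D)\sim J(C)^{\langle\tau\rangle}\sim J(C_0)$, since $\zeta_5$ has no eigenvalue $1$ on $A$; in fact the $\tau$-invariant part recovers the factor $J(C_0)$ \emph{with its principal polarization}, so $D\cong C_0$ by Torelli for genus $2$. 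Finally Riemann--Hurwitz, $2\cdot 6-2=5(2\cdot 2-2)$, forces $C\to C_0$ to be \'etale, and it is cyclic of degree $5$ by construction.

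The main obstacle I anticipate is the polarization step: showing that the isogeny class of $J(C_0)\times A$ carries a unique indecomposable principal polarization and that the gluing lies inside $A[\frakp]$. The wrinkle relative to Lemma~\ref{lem:degree 7 cyclic} is precisely that, lacking a given cover, we cannot pull back a principal polarization; we must classify the polarizations intrinsically. This is the content of Howe's method and will require an explicit computation with the different and the CM-type positivity conditions, complicated here by the fact that $J(C_0)$ is supersingular while $A$ is ordinary.
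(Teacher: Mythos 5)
Your numerics and your descent mechanism are all correct and match the paper: the Weil polynomial of $C_0$ is $T^4+4$, the one of $C$ factors as $(T^4+4)P_A$ with $A$ ordinary of dimension $4$, $\QQ(\zeta_5)\subseteq K_A$ with $\ZZ[\pi_A,\overline{\pi}_A]$ maximal of class number one, $\zeta_5$ is Rosati-fixed so $1\times\zeta_5$ respects the polarization, and Torelli plus Riemann--Hurwitz finish. But the step you defer to the end --- classifying the principal polarizations on the isogeny class, proving there is a unique indecomposable principally polarized variety, and proving the gluing group is $\Delta\cong A[\frakp]$ with $\frakp=(\zeta_5-1)$ --- is the entire difficulty, and as stated it contains a genuine error: the gluing is \emph{not} a priori supported at $5$. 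The support of $\Delta$ is controlled by the resultant data between the factors, and while $h_A(2)=5$ for the real Weil polynomial $h_A(T)=T^4-3T^3-6T^2+18T+1$, one also has $h_A(-2)=-19$, so the prime $19$ (gluing $A$ to the trace $-2$ elliptic factor of $J(C_0)$), and a priori also $2$, can enter $\Delta$; coprimality of $T^4+4$ and $P_A$ gives only finiteness of $\Delta$, nothing about its support. In the paper's proof $\Delta$ is a priori killed by $2^n\cdot 5\cdot 19$: the $2$-part is eliminated by the supersingular/ordinary dichotomy ($J(C_0)$ is supersingular, $A$ is ordinary), while the $19$-part is not eliminated but neutralized --- since $19\not\equiv 1\pmod 5$, an order-$5$ automorphism acts trivially on a cyclic group of order $19$, so $\alpha-1$ kills it anyway, and its triviality is observed only \emph{a posteriori}. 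An argument that insists on $\Delta\subseteq A[5]$ from the start would stall exactly here.

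Moreover, the polarization classification you flag as the ``main obstacle'' is avoidable, because there is no need to run the argument ``in reverse'': $C$ is given in the hypothesis, so $J(C)$ already carries a principal polarization, and the paper manufactures the isogeny directly rather than enumerating polarizations. Applying \cite[Theorem~4.2]{howe2021} to the factors $(T-2)$, $(T+2)$ of the real Weil polynomial yields maps $f_1\colon C\to E_1$, $f_2\colon C\to E_2$ of degrees dividing $2^2\cdot 5$ and $2^2\cdot 19$, and $g_i\colon C_0\to E_i$ of degrees dividing $4$, where $\#E_1(\FF_2)=1$, $\#E_2(\FF_2)=5$; composing pullbacks and pushforwards through $E_1\times E_2$ gives $f^*\colon J(C_0)\to J(C)$ with $f_*\circ f^*=[2^n5^i19^j]$, hence the exact sequence $0\to\Delta\to J(C_0)\times_{\FF_2}A\to J(C)\to 0$ as in \cite[Lemma~9.3]{part1}, after which the descent of $1\times\zeta_5$ runs as you describe. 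Two endgame points also need repair. Your identification of $D=C/\langle\tau\rangle$ with $C_0$ via ``the invariant part with its principal polarization'' is itself an unproved claim (and the $\ZZ[\pi,\overline{\pi}]$/class-number bookkeeping you invoke ``likewise for $J(C_0)$'' does not literally apply to the non-simple supersingular factor); the paper instead uses the LMFDB fact that $J(C_0)$ is the \emph{unique Jacobian} in its isogeny class. And the uniqueness of $C$ is deduced from the uniqueness clause of \cite[Theorem~1.3(c)]{part1}, not from Torelli alone: over a non-closed field such as $\FF_2$, the principally polarized Jacobian determines a non-hyperelliptic curve only up to a quadratic-twist ambiguity, so ``$J(C)\cong(X,\lambda)$ hence $C$ unique'' is not by itself a complete argument.
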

\begin{proof}
The real Weil polynomial of $C$ factors as
\[
(T-2)(T+2)(T^4 - 3T^3 - 6T^2 + 18T + 1).
\]
Let $E_1,E_2$ be elliptic curves over $\FF_2$ with $\#E_1(\FF_2) = 1, \#E_2(\FF_2) = 5$.
By \cite[Theorem~4.2]{howe2021}, there exist morphisms $f_1\colon C \to E_1$, $f_2\colon C \to E_2$ of degrees dividing $2^2 \times 5$, $2^2 \times 19$ respectively. 
By \cite[Theorem~4.2]{howe2021} again, there exist morphisms $g_1\colon C_0 \to E_1$, $g_2\colon C_0 \to E_2$ of degrees dividing 4.
The composition $f_{i*} \circ f^*_i$ equals the isogeny $[\deg(f_i)]$ on $J(E_i)$;
similarly, the composition $g_{i*} \circ g^*_i$ equals the isogeny $[\deg(g_i)]$ on $J(E_i)$.

The composition of $g_1^* \times g_2^*: E_1 \times E_2 \to J(C_0)$ followed by $g_{1*} \times g_{2*}: J(C_0) \to E_1 \times E_2$ is multiplication by a power of 2 on $E_1 \times E_2$, which in particular is an isogeny. Since $E_1 \times E_2$ and $J(C_0)$ are of the same dimension, it follows that both maps in the composition are also isogenies, and their composition in the opposite order is also multiplication by a power of 2 (the same one in fact).

Let $f^*\colon J(C_0) \to J(C)$ and $f_*\colon J(C) \to J(C_0)$ be the respective compositions
\[
J(C_0) \stackrel{g_{1*} \times g_{2*}}{\longrightarrow} E_1 \times E_2 \stackrel{f_1^* \times f_2^*}{\longrightarrow} J(C),
\qquad
J(C) \stackrel{f_{1*} \times f_{2*}}{\longrightarrow} E_1 \times E_2 \stackrel{g_1^* \times g_2^*}{\longrightarrow} J(C).
\]
The composition $f_*\circ f^*\colon J(C_0) \to J(C_0)$ is multiplication by an integer 
of the form $2^n 5^i 19^j$ with $n \geq 0$ and $i,j \in \{0,1\}$.
Let $A$ be the reduced closed subscheme of the identity component of $\ker(f_*)$. Then as in \cite[Lemma~9.3]{part1}, we have an exact sequence
\[
0 \to \Delta \to J(C_0) \times_{\FF_2} A \to J(C) \to 0
\]
in which the map $J(C_0) \to J(C)$ is $f^*$ and $\Delta$ is a finite flat group scheme over $\FF_2$ killed by $2^n \times 5 \times 19$ for some $n \geq 0$.
In fact we must have $n = 0$ because $J(C_0)$ is supersingular (so any 2-power-torsion group subscheme of it is biconnected) whereas $A_2$ is ordinary (so any 2-power-torsion group subscheme of it has trivial biconnected constituent). 

Let $P$ be the Weil polynomial of $A$ and put $K_2 = \QQ[\pi]/(P(\pi))$.
Using \Magma, we compute that $\ZZ[\pi, \overline{\pi}]$ is the maximal order of $K$ and its class number is 1. Consequently, $A$ is the unique abelian variety with its Weil polynomial.
Since $\zeta_5 \in K$, we deduce that $A$ admits an automorphism $\alpha$ of order 5.

Since $19 \not\equiv 1 \pmod{5}$, $\alpha-1$ kills the 19-part of $\Delta$.
In the field $K$, the rational prime $5$ decomposes as $\frakp_1^4 \frakp_2^4$
and $\zeta_5-1$ has positive valuation with respect to both $\frakp_1$ and $\frakp_2$; hence $\alpha-1$ also kills the 5-part of $\Delta$.
We conclude that $\alpha-1$ kills all of $\Delta$;
as in the proof of \cite[Proposition~4.2]{howe2021}, we conclude that 
$J(C)$ admits an automorphism of order 5 preserving its principal polarization
and fixing $J(C_0)$.
By Torelli, $C$ admits an automorphism of order 5, the quotient by which is a curve whose Jacobian is isogenous to $J(C_0)$.
From \href{https://www.lmfdb.org/Variety/Abelian/Fq/2/2/a_a}{LMFDB} we see that $J(C_0)$ is the unique Jacobian in its isogeny class, so $C$ is in fact a cyclic degree-5 cover of $C_0$, which by Riemann--Hurwitz must be \'etale. (Note that \emph{a posteriori} the 19-part of $\Delta$ is trivial.)
The map $C \to C_0$ defines an extension of function fields of relative class number 1;
by the uniqueness aspect of \cite[Theorem~1.3(c)]{part1}, this leaves only one possible isomorphism class for $C$.
\end{proof}

\begin{remark}
Note that the proof of Lemma~\ref{lem:degree 7 cyclic} does not rule out the existence of a curve
with the point counts ascribed to $C'$, but which does not cover $C$.
By contrast, the proof of Lemma~\ref{lem:genus 6 cover} does rule out the existence of a curve
with the point counts ascribed to $C$, but which does not cover $C_0$; this situation is more typical of applications of this strategy as described in \cite{howe2021}.
\end{remark}

\begin{remark}
It is now possible to give an alternate brute-force proof of the final assertion of Lemma~\ref{lem:degree 7 cyclic}.
To wit, the isogeny class of $J(C)$ is \avlink{6.2.ad\_c\_a\_f\_am\_q},
whose LMFDB entry includes data from a census of curves of genus 6 over $\FF_2$
(which was unavailable at the time of writing of \cite{part1});
this data shows that $C$ is unique.
\end{remark}

\end{document}